\newtheorem{theorem}{Theorem}[section]
\newtheorem{corollary}[theorem]{Corollary}
\newtheorem{lemma}[theorem]{Lemma}
\newtheorem{proposition}[theorem]{Proposition}
\newcommand{\agree}[1]{\left[#1 \right]_\Gamma}
\newcommand{\switchhom}[3]{#1\to_{#3}#2}
\newcommand{\nswitchhom}[3]{#1\centernot\to_{#3}#2}
\newcommand{\switch}[3]{#3^{(#1,#2)}}
\title{The 2-colouring problem for $(m,n)$-mixed graphs with switching is polynomial}
\author{Richard C.\ Brewster\affiliationmark{1}\thanks{Supported by NSERC (Canada).}
	\and Arnott Kidner\affiliationmark{2}
	\and Gary MacGillivray\affiliationmark{2}\thanks{Supported by NSERC (Canada).}}
\affiliation{Department of Mathematics and Statistics, Thompson Rivers University, Kamloops, B.C., Canada \\
Department of Mathematics and Statistics, University of Victoria, Victoria, B.C., Canada}
\keywords{Graph colouring, edge-coloured homomorphism, reconfiguration, switching}
\begin{document}
\publicationdetails{24}{2022}{2}{5}{9242}
\maketitle
\begin{abstract}
A mixed graph is a set of vertices together with an edge set and an arc set.		
An $(m,n)$-mixed graph $G$ is a mixed graph whose edges are each assigned one of $m$ colours, and whose arcs are each assigned one of $n$ colours.  A \emph{switch} at a vertex $v$ of $G$ permutes the edge colours, the arc colours, and the arc directions of edges and arcs incident with $v$.  The group of all allowed switches is $\Gamma$.

Let $k \geq 1$ be a fixed integer and $\Gamma$ a fixed permutation group. 
We consider the problem that takes as input an $(m,n)$-mixed graph $G$ and asks if there is a sequence of switches at vertices of $G$ with respect to $\Gamma$ so that the resulting $(m,n)$-mixed graph admits a homomorphism to an $(m,n)$-mixed graph on $k$ vertices.  Our main result establishes that this problem can be solved in polynomial time for $k \leq 2$, and is NP-hard for $k \geq 3$.  This provides a step towards a general dichotomy theorem for the $\Gamma$-switchable homomorphism decision problem.		
	\end{abstract}
	
\section{Introduction}\label{sec:intro}

Homomorphisms of graphs (and in general relational systems) are well studied generalizations of vertex colourings~\citep{Hell_Nesetril_2008}.  Given a graph (or some generalization) $G$, the question of whether $G$ admits a $k$-colouring, can be equivalently rephrased as ``does $G$ admit a homomorphism to a target on $k$ vertices?''.

In this paper we study homomorphisms of $(m,n)$-mixed graphs endowed with a switching operation under some fixed permutation group. (Formal definitions and precise statements of our results are given below.)  Our main result is that the $2$-colouring problem under these homomorphisms can be solved in polynomial time.  As $k$-colouring for classical graphs can be encoded within our framework, $k$-colouring in our setting is NP-hard for fixed $k \geq 3$.  That is, $k$-colouring for $(m,n)$-mixed graphs with a switching operation exhibits a dichotomy analogous to $k$-colouring of classical graphs~\citep{Garey_Johnson_1979}.  Thus, our work may be viewed as a first step towards a dichotomy theorem for homomorphisms of $(m,n)$-mixed graphs with a switching operation.  We remark that the $k$-colouring problem in our setting is not obviously a Constraint Satisfaction Problem~\citep{Feder_Vardi_1993, Bulatov_2017, Zhuk_2020} nor is membership in NP clear.  These ideas are explored further in a companion paper~\citep{Brewster_KM_2022}.

We begin with the key definitions to state our main result.  In this paper, all graphs and all groups are finite. 
	
A \emph{mixed graph} is a triple $G = (V(G), E(G), A(G))$ consisting of a set of vertices $V(G)$, a set of edges $E(G)$ of unordered pairs of vertices, and a set of arcs $A(G)$ of ordered pairs of vertices.  Given a pair of vertices $u$ and $v$, there is at most one edge, or one arc, but not both, joining them.  Further we assume $G$ is loop-free. We will use $uv$ to denote an edge or an arc with end points $u$ and $v$ where in the latter case the arc is oriented from $u$ to $v$.  

Mixed graphs were introduced in~\citep{Nesetril_Raspaud_2000} as an attempt to unify the theories of homomorphisms of $2$-edge coloured graphs and of oriented graphs.   Numerous similarities between the two settings have been observed (see for example, \citep{Alon_Marshall_1998, Raspaud_Sopena_1994, Kostochka_SZ_1997}), whereas, \citep{Sen_2014} provides examples highlighting key differences.

In this work we study edge and arc coloured generalizations of mixed graphs.  Thus, our work may be viewed as a unification of homomorphisms of edge-coloured graphs and of arc-coloured graphs. Let $m$ and $n$ be non-negative integers.  Denote by $[m]$ the set $\{ 1, 2, \dots, m \}$. An \emph{$(m,n)$-mixed graph} is a mixed graph $G=(V(G),E(G),A(G))$ together with functions $c: E(G) \to [m]$ and $d: A(G) \to [n]$ that assign to each edge one of $m$ colours, and to each arc one of $n$ colours respectively.  (The colour sets for edges and arcs are disjoint.)  The \emph{underlying mixed graph} of $G$ is $(V(G),E(G),A(G))$, i.e., the mixed graph obtained by ignoring edge and arc colours. The \emph{underlying graph} of $G$ is the graph obtained by ignoring edge and arc colours and arc directions.  An $(m,n)$-mixed graph is a cycle if its underlying graph is a cycle and similarly for other standard graph theoretic terms such as path, tree, bipartite, etc.
	
	%Definition of monochromatic
Fundamental to our work is the following definition. An $(m,n)$-mixed graph is \emph{monochromatic of colour $i$} if either every edge is colour $i$ and there are no arcs, or every arc is colour $i$ and there are no edges.  While a monochromatic mixed graph with only edges is naturally isomorphic to its underlying graph, we note that we still view the edges as having colour $i$.
	
	%Definition homomorphism
Let $G$ and $H$ be $(m,n)$-mixed graphs.	A \emph{homomorphism} of $G$ to $H$ is a function $h: V(G) \to V(H)$ such that if $uv$ is an edge of colour $i$ in $G$, then $h(u)h(v)$ is an edge of colour $i$ of $H$, and if $uv$ is an arc of colour $j$ in $G$, then $h(u)h(v)$ is an arc of colour $j$ in $H$.  We denote the existence of a homomorphism of $G$ to $H$ by $G \to H$ or $h: G \to H$ when the name of the function is required.  

	%Observations about the group   
We now turn our attention to the concept of switching an $(m,n)$-mixed graph at a vertex $v$. This generalizes the concept of switching edge colours or signs~\citep{Brewster_Graves_2009, Zaslavsky_1982} (permuting the colour of edges incident at $v$) and pushing digraphs~\citep{Klostermeyer_MacGillivray_2004} (reversing the direction of arcs incident at $v$).
Let $\Gamma\leq S_m \times S_n \times S_2^n$ be a permutation group. An element of $\Gamma$ will act on edge colours, arc colours, and arc directions. Specifically, the element is an ordered $(n+2)$-tuple $\pi = (\alpha, \beta, \gamma_1, \gamma_2, \ldots, \gamma_n)$ where $\alpha$ acts on the edge colours, $\beta$ acts on the arc colours, and $\gamma_i$ acts on the arc direction of arcs of colour $i$.  For the remainder of the paper, $\Gamma$ will be a permutation group as described here.

	%Definition switching at a vertex
Let $G$ be a $(m,n)$-mixed graph, and $\pi = (\alpha, \beta, \gamma_1, \gamma_2,$ $ \ldots, \gamma_n)\in \Gamma$. Define $\switch{v}{\pi}{G}$ as the $(m,n)$-mixed graph arising from $G$ by \emph{switching at vertex $v$ with respect to $\pi$} as follows. Replace each edge $vw$ of colour $i$ by an edge $vw$ of colour $\alpha(i)$. Replace each arc $a$ of colour $i$ incident at $v$ (i.e., $a=vx$ or $a=xv$) with an arc of colour $\beta(i)$ and orientation $\gamma_i(a)$.  Note, $\gamma_i(a) \in \{ vx, xv \}$.	

	%Definition sequence of switches
Given  a sequence of ordered pairs from $V(G) \times \Gamma$, say $\Sigma = (v_1,\pi_1)(v_2,\pi_2)\ldots$ $(v_k,\pi_k)$, we define \emph{switching $G$ with respect to the sequence $\Sigma$} as follows: 
$$
G^\Sigma =(G)^{(v_1,\pi_1)(v_2,\pi_2)\ldots(v_k,\pi_k)} =  (G^{(v_1,\pi_1)})^{(v_2,\pi_2)(v_3,\pi_3)\ldots(v_k,\pi_k)}.
$$
Note if we let $\Sigma^{-1}=(v_k,\pi_k^{-1})\dots(v_1,\pi_1^{-1})$, then $G^{\Sigma\Sigma^{-1}}=G^{\Sigma^{-1}\Sigma}=G$.

	%Observations about switching sets
Given a subset of vertices, $X \subseteq V(G)$, we can switch at each vertex of $X$ with respect to a permutation $\pi \in \Gamma$, the result of which we denote by $\switch{X}{\pi}{G}$. This operation is well defined independently of the order in which we switch. If $uv$ is an edge or arc with one end in $X$, say $u$, then we simply switch at $u$ with respect to $\pi$. Suppose both ends of $uv$ are in $X$.  If $uv$ is an edge of colour $i$, then after switching at each vertex of $X$, the edge will have colour $\alpha^2(i)$.  If $uv$ is an arc, then after switching the colour will be $\beta^2(i)$ and the direction will be $\gamma_{\beta(i)}\gamma_i(uv)$.

	%Definition switch equivalent.  
	Two $(m,n)$-mixed graphs $G$ and $G'$ with the same underlying graph are \emph{$\Gamma$-switch equivalent} if there exists a sequence of switches $\Sigma$ such that $G^\Sigma = G'$.  We may simply say \emph{switch equivalent} when $\Gamma$ is clear from context. Note since $V(G) = V(G')$, we are viewing both $(m,n)$-mixed graphs as labelled and thus are not considering equivalence under switching followed by an automorphism. Such an extension of equivalence is possible but unnecessary in this work. Since $\Gamma$ is a group, the following proposition is immediate.
	
	\begin{proposition}\label{prop:switchEquivalent}
		$\Gamma$-switch equivalence is an equivalence relation on the set of (labelled) $(m,n)$-mixed graphs. 
	\end{proposition}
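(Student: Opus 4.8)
The plan is to verify the three defining properties of an equivalence relation --- reflexivity, symmetry, and transitivity --- directly from the definition of $\Gamma$-switch equivalence, exploiting at each step the fact that $\Gamma$ is a group. The whole argument hinges on three facts: $\Gamma$ contains an identity element, every element of $\Gamma$ has an inverse lying in $\Gamma$, and switching sequences can be concatenated. I would also note at the outset that a single switch alters only edge colours, arc colours, and arc directions, never the underlying graph; hence whenever $G^\Sigma = G'$ the two $(m,n)$-mixed graphs automatically share an underlying graph, so the ``same underlying graph'' clause in the definition never obstructs any of the three verifications.

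For reflexivity, I would use the identity element $e = (\mathrm{id}, \mathrm{id}, \mathrm{id}, \ldots, \mathrm{id})$, which lies in $\Gamma$ since $\Gamma$ is a group. Switching at any vertex $v$ with respect to $e$ fixes every edge colour, arc colour, and arc direction, so $\switch{v}{e}{G} = G$, and the length-one sequence $\Sigma = (v,e)$ witnesses $G \cong_\Gamma G$. For symmetry, suppose $\Sigma = (v_1,\pi_1)\cdots(v_k,\pi_k)$ satisfies $G^\Sigma = G'$. Because $\Gamma$ is a group, each $\pi_i^{-1}$ lies in $\Gamma$, so $\Sigma^{-1} = (v_k,\pi_k^{-1})\cdots(v_1,\pi_1^{-1})$ is again a valid switching sequence. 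Applying the cancellation identity $G^{\Sigma\Sigma^{-1}} = G$ already recorded in the excerpt then gives $(G')^{\Sigma^{-1}} = (G^\Sigma)^{\Sigma^{-1}} = G^{\Sigma\Sigma^{-1}} = G$, so $G' \cong_\Gamma G$.

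For transitivity, suppose $G^{\Sigma_1} = G'$ and $(G')^{\Sigma_2} = G''$ for switching sequences $\Sigma_1$ and $\Sigma_2$. Forming the concatenation $\Sigma_1\Sigma_2$ --- which is once more a sequence of ordered pairs from $V(G) \times \Gamma$ --- and unwinding the recursive definition of switching along a sequence yields $G^{\Sigma_1\Sigma_2} = (G^{\Sigma_1})^{\Sigma_2} = (G')^{\Sigma_2} = G''$, so $G \cong_\Gamma G''$.

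I expect no serious obstacle: each step is immediate once the group axioms are invoked, and the excerpt already supplies the identity $G^{\Sigma\Sigma^{-1}} = G$ that drives symmetry. The only point requiring minor care is bookkeeping --- confirming that $\Sigma^{-1}$ genuinely reverses the order of the pairs while inverting each permutation, and that concatenation of sequences corresponds to composing the two switching operations in the correct order --- both of which follow directly from the recursive definition of $G^\Sigma$.
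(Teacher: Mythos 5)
Your proof is correct and matches the paper's reasoning: the paper gives no explicit argument, stating only that the proposition is immediate because $\Gamma$ is a group, and your verification of reflexivity (via the identity element), symmetry (via the recorded identity $G^{\Sigma\Sigma^{-1}}=G$), and transitivity (via concatenation of switching sequences) is exactly the argument the paper is alluding to.
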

		
We are now ready to define switching homomorphisms.  Our definition naturally builds on push homomorphisms of digraphs~\citep{Klostermeyer_MacGillivray_2004} and homomorphisms of signed graphs, introduced by~\citep{Guenin_2005} and developed further by~\citep{Naserasr_RS_2015}.  See~\citep{Naserasr_SZ_2021} for a recent survey on signed graph homomorphisms. Let $G$ and $H$ be $(m,n)$-mixed graphs. A \emph{$\Gamma$-switchable homomorphism} of $G$ to $H$ is a sequence of switches $\Sigma$ together with a homomorphism $G^\Sigma \to H$.  We denote the existence of such a homomorphism by $\switchhom{G}{H}{\Gamma}$, 
or $f: \switchhom{G}{H}{\Gamma}$ when we wish to name the mapping.  Observe the notation $G \to H$ refers to a homomorphism of $(m,n)$-mixed graphs without switching, and $\switchhom{G}{H}{\Gamma}$ refers to switching $G$ followed by a homomorphism of (the resulting) $(m,n)$-mixed graphs.

A useful fact is the following.  If $\switchhom{G}{H}{\Gamma}$, then $\switchhom{G}{H^{(v,\pi)}}{\Gamma}$ for any $v \in V(H)$ and any $\pi \in \Gamma$.  To see this let $\Sigma$ be a sequence of switches such that $f: G^\Sigma \to H$.  Let $X = f^{-1}(v) \subseteq V(G^\Sigma)$.  It is easy to see the same vertex mapping $f: V(G) \to V(H)$ defines a homomorphism $(G^\Sigma)^{(X,\pi)} \to H^{(v,\pi)}$.  As a result of this observation, we have two immediate corollaries.  First, $\Gamma$-switchable homomorphisms compose.  Second, when studying the question ``does $G$ admit a $\Gamma$-switchable homomorphism to $H$?'' we are free to replace $H$ with any $H'$ switch equivalent to $H$.
	
For (classical) graphs, $G$ is $k$-colourable if and only if it admits a homomorphism to a graph $H$ of order $k$.  
Analogously, we say an $(m,n)$-mixed graph $G$ is \emph{$\Gamma$-switchable $k$-colourable}, if there is an $(m,n)$-mixed graph $H$ of order $k$ such that $\switchhom{G}{H}{\Gamma}$.  The corresponding decision problem is defined as follows. Let $k \geq 1$ be a fixed integer and $\Gamma\leq S_m \times S_n \times S_2^n$ be a fixed group.  We define the following decision problem.

\begin{quote}
\samepage
\begin{list}{}{}	
    \setlength{\itemsep}{0pt}

    \item[\textsc{$\Gamma$-Switchable $k$-Col}]~
        
	\item[\textsc{Input:}] An $(m,n)$-mixed graph $G$.
	
	\item[\textsc{Question:}] Is $G$ $\Gamma$-switchable $k$-colourable?
\end{list}
\end{quote}
	
Our main result is the following dichotomy result for \textsc{$\Gamma$-Switchable $k$-Col}.
	
\begin{theorem}\label{thm:main}
Let $k\geq1$ be an integer and $\Gamma\leq S_m \times S_n \times S_2^n$ be a group. If $k\leq2$, then \textsc{$\Gamma$-Switchable $k$-Col} is solvable in polynomial time. If $k\geq 3$, then \textsc{$\Gamma$-Switchable $k$-Col} is NP-hard.
\end{theorem}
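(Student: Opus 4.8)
The plan is to treat the two halves of the dichotomy separately, with essentially all of the work lying in the polynomial-time claim for $k \le 2$. For the NP-hardness when $k \ge 3$ it suffices to exhibit a single group for which the problem is already hard. Taking $m=1$, $n=0$ and $\Gamma$ trivial, an $(m,n)$-mixed graph is just a graph, there is nothing to switch, and $\switchhom{G}{H}{\Gamma}$ is an ordinary homomorphism; since $G$ admits a homomorphism to some loop-free graph on $k$ vertices if and only if $G \to K_k$, i.e.\ if and only if $\chi(G)\le k$, the problem \textsc{$\Gamma$-Switchable $k$-Col} for these parameters is exactly classical $k$-colouring, which is NP-complete for $k\ge 3$ by~\citep{Garey_Johnson_1979}. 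This is the precise sense in which $k=2$ is the boundary.

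For $k=1$ the problem is trivial: switching never creates or destroys an edge or arc, so the underlying graph is a switching invariant, and a loop-free target on one vertex carries no edge or arc; hence $G$ is $\Gamma$-switchable $1$-colourable if and only if $E(G)=A(G)=\varnothing$. The substance is $k=2$. Since $m,n,\Gamma$ are fixed, there are only constantly many $(m,n)$-mixed graphs on two vertices — the vertices are joined by nothing, by an edge of one of $m$ colours, or by an arc of one of $n$ colours in one of two directions — so it suffices to decide $\switchhom{G}{H}{\Gamma}$ for each such fixed target $H$ and answer \textsc{yes} if any succeeds. Fix such an $H$. Because a homomorphism to a loop-free two-vertex target forces every edge and arc to run between the two colour classes, a first necessary condition is that the underlying graph of $G$ be bipartite; and because the two vertices of $H$ are joined by at most one edge or one arc but never both, while switching preserves the edge/arc type of each pair, a second necessary condition is that $G$ contain only edges, or only arcs, of the type present in $H$ (in particular, if $G$ contains both an edge and an arc the answer is \textsc{no}). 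Both conditions are checked directly.

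Assuming these pass, I would model $\switchhom{G}{H}{\Gamma}$ as the problem of assigning to each vertex $v$ a colour in $V(H)$ together with a net switch $\pi_v\in\Gamma$, subject to one constraint per edge and per arc. The one genuinely delicate point is that switches at distinct vertices need not commute, so ``the net switch at $v$'' is a priori ambiguous. Here the bipartite structure rescues the argument: fix a proper $2$-colouring of each component (forced up to swapping the two parts, which for a symmetric edge is harmless and for an arc is absorbed by the direction component of $\Gamma$), and perform all switches on one part before those on the other. Since neither part spans an edge or arc, switches within a part act independently, and each edge or arc $uv$ acquires a colour, and for arcs a direction, that is a well-defined function of $\pi_u$, $\pi_v$ and the original label. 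The constraint that $uv$ map correctly into $H$ then becomes a condition of the form ``$\pi_u^{-1}\pi_v$ lies in a prescribed coset'' in the relevant finite quotient of $\Gamma$, together with unary orbit conditions tying the labels of $G$ to the label of $H$.

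The resulting instance is a constraint satisfaction problem over a fixed finite group all of whose constraints are coset (Cayley-type) constraints — the tractable generalisation of testing balance in signed and gain graphs. I would solve it in polynomial time by propagating along a spanning forest to describe the set of consistent net switches as a coset of a subgroup, computable by standard permutation-group routines, and then imposing the remaining independent-cycle constraints by coset intersection. The main obstacle, and the step requiring the most care, is exactly the interaction between the non-commutativity of switching and the colouring: one must verify that the ``switch one part, then the other'' normal form captures every switch-equivalent copy of $G$ relevant to mapping into $H$, and that the constraints it produces genuinely have the coset structure that makes the instance tractable, rather than an arbitrary and possibly NP-hard binary CSP.
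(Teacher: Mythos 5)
There are two genuine gaps, one minor and one fatal. First, the NP-hardness claim is universally quantified: the theorem asserts that for \emph{every} fixed group $\Gamma\leq S_m\times S_n\times S_2^n$ and every $k\geq 3$ the problem is NP-hard, so exhibiting a single choice ($m=1$, $\Gamma$ trivial) for which it coincides with classical $k$-colouring does not prove the statement. The repair is short and is what the paper does: for arbitrary fixed $\Gamma$, reduce from classical $k$-colouring by assigning every edge colour $1$; since switching never alters the underlying graph, any $\Gamma$-switchable homomorphism to a $k$-vertex target induces a homomorphism of underlying graphs, and conversely.

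The fatal gap is the one you flagged yourself and then built the whole $k=2$ algorithm on top of: the ``switch all of one part, then all of the other'' normal form does \emph{not} capture every switch-equivalent copy of $G$. Concretely, take $m=3$, $\Gamma=S_3$, and the $4$-cycle $u_1v_1u_2v_2$ with $v_1u_2$ of colour $3$ and the other three edges of colour $1$. The sequence $(u_1,(12)),(v_1,(13)),(u_1,(12))$ applied to the monochromatic colour-$1$ cycle produces exactly this colouring, so this cycle \emph{can} be switched to be monochromatic (reverse the sequence), i.e.\ $\switchhom{C_4}{K_2^1}{\Gamma}$. But in your one-pass form the edge constraints read $\pi_{v_1}\pi_{u_1}(1)=\pi_{v_2}\pi_{u_1}(1)=\pi_{v_2}\pi_{u_2}(1)=c$ and $\pi_{v_1}\pi_{u_2}(3)=c$; injectivity of $\pi_{v_2}$ forces $\pi_{u_1}(1)=\pi_{u_2}(1)$ and then injectivity of $\pi_{v_1}$ forces $\pi_{u_2}(1)=\pi_{u_2}(3)$, a contradiction (the other part-order fails symmetrically). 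So your algorithm answers No on a Yes-instance. Reaching all switch-equivalent colourings genuinely requires interleaved, repeated switches at the same vertex, and this is precisely the difficulty the paper's machinery exists to handle: the $\Gamma$-substitution classes $\agree{i}$ are computed as a transitive closure over the reconfiguration graph of edge-coloured $C_4$'s, a theta-graph argument shows the classes are independent of the (even) cycle length, and Hell's retraction theorem reduces a general bipartite graph to its fundamental cycles relative to a spanning tree. Separately, even where the normal form does apply, the binary relation $\{(\pi_u,\pi_v):\pi_v\pi_u\in\tau\,\mathrm{Stab}_\Gamma(c)\}$ is not in general a coset of a subgroup of $\Gamma\times\Gamma$ (closure under multiplication fails when the stabilizer is not normal), so the claimed coset-CSP tractability would also need a real argument.
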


The NP-hardness half of the dichotomy is immediate.

\begin{proposition}\label{prop:NPc}
For $k \geq 3$, \textsc{$\Gamma$-Switchable $k$-Col} is NP-hard.
\end{proposition}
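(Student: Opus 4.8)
The plan is to reduce the classical $k$-colouring problem, which is NP-complete for $k \geq 3$~\citep{Garey_Johnson_1979}, to \textsc{$\Gamma$-Switchable $k$-Col}. First I would fix an edge colour, say colour $1 \in [m]$, and send a classical graph $G$ (an instance of $k$-colouring) to the monochromatic $(m,n)$-mixed graph $\phi(G)$ on vertex set $V(G)$ whose edge set is $E(G)$ with every edge coloured $1$ and with no arcs. This map is computable in polynomial time, so it remains to show that $\phi(G)$ is $\Gamma$-switchable $k$-colourable if and only if $G$ is $k$-colourable.

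For the forward direction, suppose $\switchhom{\phi(G)}{H}{\Gamma}$ with $|V(H)| = k$, witnessed by a sequence $\Sigma$ and a homomorphism $f\colon \phi(G)^\Sigma \to H$. The observation to isolate is that a switch permutes edge colours among themselves and arc colours among themselves and never turns an edge into an arc; hence $\phi(G)^\Sigma$ has the same underlying graph as $\phi(G)$, namely $G$. Since $H$ is loop-free, for every edge $uv$ of $G$ the image $f(u)f(v)$ is an edge of $H$, so $f(u) \neq f(v)$. Thus $f$ restricts to a proper $k$-colouring of $G$.

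For the converse, I would take $H$ to be the monochromatic complete $(m,n)$-mixed graph on $[k]$: vertex set $[k]$, every pair $\{a,b\}$ an edge of colour $1$, and no arcs. Given a proper $k$-colouring $c$ of $G$, the map $c$ sends each colour-$1$ edge $uv$ of $\phi(G)$ to the colour-$1$ edge $c(u)c(v)$ of $H$ (well defined since $c(u)\neq c(v)$), so $c$ is a homomorphism $\phi(G) \to H$ using no switching at all; taking $\Sigma$ empty (equivalently, switching everywhere by the identity of $\Gamma$) gives $\switchhom{\phi(G)}{H}{\Gamma}$. This completes the reduction whenever at least one edge colour is available, and it goes through for every fixed $\Gamma$.

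The step I expect to be most delicate is not either implication above, since both become short once the switch-invariance of the underlying graph and the loop-freeness of targets are pinned down; rather it is confirming that the argument covers the full parameter range of the statement. The one genuinely separate case is $m=0$, where no edge colour exists and an edge of $G$ must instead be encoded by an arc. There the direction carried by an arc is extra data that a single-arc target cannot discard symmetrically, so the monochromatic encoding of the converse breaks and must be replaced by a directed gadget, or one appeals directly to the hardness of the corresponding arc-coloured colouring problem. Handling this edge-free corner is where the real work, if any, lies.
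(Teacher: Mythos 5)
Your reduction is exactly the paper's proof: colour every edge of a classical $k$-colouring instance with colour $1$, use the monochromatic $K_k$ of colour $1$ as the target, and observe that switching preserves the underlying graph so any $\Gamma$-switchable homomorphism to a loop-free target of order $k$ induces a proper $k$-colouring. The $m=0$ corner you flag is likewise left implicit in the paper, whose proof also tacitly assumes at least one edge colour is available, so your proposal matches the published argument in both substance and scope.
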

	
\begin{proof}
Let $G$ be an instance of $k$-colouring (for classical graphs).  Let $G'$ be the $(m,n)$-mixed graph obtained from $G$ by assigning each edge colour $1$.  If $G$ is $k$-colourable, then clearly $G'$ is $k$-colourable.  (Assign all edges in $G'$ and $K_k$ the colour $1$ and use the same mapping.)  Conversely, if $G'$ is $k$-colourable, then the $\Gamma$-switchable homomorphism induces a homomorphism of the underlying graphs showing $G$ is $k$-colourable.
\end{proof}	

For an Abelian group we remark that if $G$ and $G'$ are switch equivalent, then there  is a sequence of switches $\Sigma$ of length at most $|V(G)|$ so that $G^\Sigma = G'$.  (This is discussed in more detail below.)  Thus when $\Gamma$ is Abelian, \textsc{$\Gamma$-Switchable $k$-Col} is in NP, and we can conclude for $k \geq 3$, the problem is NP-complete.  The situation for non-Abelian groups is more complicated and is studied further in~\citep{Brewster_KM_2022}.
	
It is trivial to decide if an $(m,n)$-mixed graph is $1$-colourable. Thus to complete the proof we settle the case $k=2$. Results are known when $\Gamma$ belongs to certain families of groups~\citep{Duffy_MT_2021, Leclerc_MW_2021}.  The remainder of the paper establishes the problem is polynomial time solvable for all groups $\Gamma$.
	
We conclude the introduction with a remark on the general homomorphism problem.  Let $H$ be a fixed $(m,n)$-mixed graph and $\Gamma$ a fixed permutation group. \\

\begin{quote}
\samepage
\begin{list}{}{}	
    \setlength{\itemsep}{0pt}

    \item[\textsc{$\Gamma$-Hom-$H$}]~
        
	\item[\textsc{Input:}] An $(m,n)$-mixed graph $G$.
	
	\item[\textsc{Question:}] Does $G$ admit a $\Gamma$-switchable homomorphism to $H$?
\end{list}
\end{quote}
	
The complexity of \textsc{$\Gamma$-Hom-$H$} has been investigated for the same families of groups as $\Gamma$-switchable $k$-colouring in~\citep{Duffy_MT_2021, Leclerc_MW_2021}.  The following theorem is an immediate corollary to our main result.  
We remark that similar polynomial complexity results have been proved for push homomorphisms~\citep{Klostermeyer_MacGillivray_2004} and for signed graphs~\citep{Brewster_FHN_2017}.

	\begin{theorem}\label{thm:main2}
	Let $H$ be a $2$-colourable $(m,n)$-mixed graph, then
	\textsc{$\Gamma$-Hom-$H$} is polynomial time solvable.
	\end{theorem}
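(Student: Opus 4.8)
The plan is to reduce \textsc{$\Gamma$-Hom-$H$} for a $2$-colourable target $H$ to the decision of whether an input admits a $\Gamma$-switchable homomorphism to a single \emph{fixed} target on at most two vertices, and then to invoke the polynomial-time procedure that proves Theorem~\ref{thm:main} in the case $k \le 2$. The whole argument rests on the fact, recorded after the definition of switchable homomorphism, that $\Gamma$-switchable homomorphisms compose: if $\switchhom{H_1}{H_2}{\Gamma}$ and $\switchhom{H_2}{H_1}{\Gamma}$, then $\switchhom{G}{H_1}{\Gamma}$ iff $\switchhom{G}{H_2}{\Gamma}$ for every input $G$.

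First I would replace $H$ by a $\Gamma$-switchable core, that is, a minimum-order $(m,n)$-mixed graph $H^{\ast}$ that is switch-homomorphically equivalent to $H$, so that $H \to_\Gamma H^{\ast}$ and $H^{\ast} \to_\Gamma H$. (One obtains $H^{\ast}$ as a minimum-order homomorphic image of a switching of $H$; it is an induced substructure of $H$, so the inclusion gives $H^{\ast} \to_\Gamma H$ with the empty switching sequence, while $H \to_\Gamma H^{\ast}$ holds by choice.) By the composition remark, $\switchhom{G}{H}{\Gamma}$ if and only if $\switchhom{G}{H^{\ast}}{\Gamma}$, so it suffices to solve \textsc{$\Gamma$-Hom-$H^{\ast}$}. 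I would stress here why this detour is needed: from $H \to_\Gamma H_0$ with $|V(H_0)|\le 2$ one gets the easy implication $\switchhom{G}{H}{\Gamma}\Rightarrow\switchhom{G}{H_0}{\Gamma}$, but a homomorphic image need not map back into its source, so the converse can fail; passing to the core restores a genuine equivalence.

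The key step is to bound $|V(H^{\ast})|$. Since $H^{\ast}$ is equivalent to the $2$-colourable graph $H$, composition gives $H^{\ast} \to_\Gamma H_0$ for some $(m,n)$-mixed graph $H_0$ on at most two vertices, inducing a partition of $V(H^{\ast})$ into two colour classes. I would argue that a switchable core mapping to a two-vertex target can itself have at most two vertices: if a colour class contained two vertices, then after a suitable switching they would become foldable by a retraction, contradicting the minimality of $H^{\ast}$. This is the switchable analogue of the classical fact that the core of a bipartite graph is $K_1$ or $K_2$, and it is exactly the point at which the structural analysis of $2$-colourable $(m,n)$-mixed graphs developed to prove Theorem~\ref{thm:main} is used. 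Consequently $H^{\ast}$ is a fixed target on at most two vertices.

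Finally, deciding $\switchhom{G}{H^{\ast}}{\Gamma}$ is a special case of \textsc{$\Gamma$-Switchable $2$-Col}: up to switch equivalence there are only finitely many $(m,n)$-mixed graphs on at most two vertices, and the polynomial-time procedure of Theorem~\ref{thm:main} decides homomorphism to any prescribed such target, in particular to $H^{\ast}$. Chaining the three reductions yields a polynomial-time algorithm for \textsc{$\Gamma$-Hom-$H$}. I expect the main obstacle to be the core-size bound in the third paragraph: proving that $2$-colourability forces the switchable core down to at most two vertices, together with confirming that the algorithm behind Theorem~\ref{thm:main} can be read as a target-specific routine, is what makes this corollary substantive rather than purely formal.
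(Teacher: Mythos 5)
Your argument is correct and is essentially the filling-in of the paper's own (unstated) proof: the paper declares the theorem an immediate corollary of the main result, the point being exactly that a $2$-colourable $H$ is switch-homomorphically equivalent to a two-vertex target occurring inside it (your $H^{\ast}$, which is some $K_2^j$, $T_2^j$, or $K_1$), so that by composition $\switchhom{G}{H}{\Gamma}$ if and only if $\switchhom{G}{H^{\ast}}{\Gamma}$. Your two flagged concerns both resolve as you hope: the core-size bound follows from folding a monochromatic bipartite switching of $H^{\ast}$ onto one of its edges, and the algorithm behind Theorem~\ref{thm:main} is genuinely target-specific (it solves \textsc{$\Gamma$-Hom-$K_2^i$} via Corollary~\ref{cor:medge2col} and reduces \textsc{$\Gamma$-Hom-$T_2^i$} to it).
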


%%%
%%% m-edge coloured graphs
%%%
\section{Restriction to $m$-edge coloured graphs}\label{sec:medge}

If a non-trivial $(m,n)$-mixed graph $G$ is $2$-colourable, then the target of order $2$ to which $G$ maps must be a monochromatic $K_2$ or a monochromatic tournament $T_2$.  In the former case $G$ must have only edges and in the latter only arcs.  Moreover, the underlying graph of $G$ must be bipartite as a $2$-colouring of $G$ induces a $2$-colouring of the underlying graph.

In this section we focus on the case where $G$ has only edges and is bipartite.  For ease of notation, and to align with the existing literature, we will refer to $G$ as an \emph{$m$-edge coloured graph}.  Recall we use $[m]$ as the set of edge colours, and in this case we may restrict $\Gamma$ to be a subgroup of  $S_m$.
We let $H$ be the $m$-edge coloured $K_2$ with its single edge of colour $i$, and denote $H$ by $K_2^{i}$.  

We begin with some key observations. Let $G$ be an $m$-edge coloured graph. If $\switchhom{G}{K_2^i}{\Gamma}$, then every colour appearing on an edge of $G$ must belong to the orbit of $i$ under $\Gamma$; otherwise, $G$ is a no instance. Therefore, we make the assumption that $\Gamma$ acts transitively on $[m]$.  Under this assumption $K_2^i$ is switch equivalent to $K_2^j$ for any $j \in [m]$.  Thus we have the following proposition.

\begin{proposition}\label{prop:maptoanyK2}
Fix $i \in [m]$. Let $G$ be a bipartite $m$-edge coloured graph.  The following are equivalent.
\begin{list}{(\arabic{enumi})}{\usecounter{enumi}}
  \item $\switchhom{G}{K_2^i}{\Gamma}$, 
  \item $\switchhom{G}{K_2^j}{\Gamma}$ for any $j \in [m]$,
  \item $G$ can be switched to be monochromatic of some colour $j$.
\end{list}
\end{proposition}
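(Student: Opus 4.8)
The plan is to prove the cycle of implications $(1) \Rightarrow (3) \Rightarrow (2) \Rightarrow (1)$, each step being a short application of the observations recorded immediately before the statement, namely that the target of a $\Gamma$-switchable homomorphism may be replaced by any switch-equivalent target, and that transitivity of $\Gamma$ on $[m]$ makes every $K_2^j$ switch equivalent to $K_2^i$. The implication $(2) \Rightarrow (1)$ is immediate, obtained by taking $j = i$, so the content lies in the first two steps.

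For $(1) \Rightarrow (3)$ I would argue as follows. Suppose $\switchhom{G}{K_2^i}{\Gamma}$, witnessed by a sequence of switches $\Sigma$ together with a homomorphism $f \colon G^\Sigma \to K_2^i$. Since $K_2^i$ consists of a single edge of colour $i$ and $G$ (hence $G^\Sigma$) has no arcs, every edge $uv$ of $G^\Sigma$ must be mapped onto this single edge, which forces $uv$ to have colour $i$. Thus $G^\Sigma$ is monochromatic of colour $i$, which is exactly $(3)$ with $j = i$. For $(3) \Rightarrow (2)$, suppose $G$ can be switched by some $\Sigma$ so that $G^\Sigma$ is monochromatic of some colour $j_0$. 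Switching does not alter the underlying graph, so $G^\Sigma$ remains bipartite; fixing a bipartition and sending each part to one of the two vertices of $K_2^{j_0}$ maps every (colour-$j_0$) edge to the single colour-$j_0$ edge, so $G^\Sigma \to K_2^{j_0}$ and hence $\switchhom{G}{K_2^{j_0}}{\Gamma}$. Because $\Gamma$ acts transitively on $[m]$, we have $\switchequiv{K_2^j}{K_2^{j_0}}{\Gamma}$ for every $j$, and replacing the target by a switch-equivalent one yields $\switchhom{G}{K_2^j}{\Gamma}$ for all $j \in [m]$, which is $(2)$.

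The argument is essentially bookkeeping and I do not anticipate a serious obstacle. The two points that require care are: (a) recording that switching leaves the underlying graph unchanged, so that bipartiteness is preserved and the partition-based homomorphism into the monochromatic $K_2^{j_0}$ is genuinely well defined; and (b) reconciling the existential ``some colour $j$'' appearing in $(3)$ with the universal ``any $j$'' appearing in $(2)$. The latter is precisely where the transitivity hypothesis on $\Gamma$ (equivalently, the switch equivalence of all the $K_2^j$) is used, and it is the only place in the proof where that hypothesis is needed.
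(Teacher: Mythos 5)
Your proof is correct and rests on the same two facts as the paper's (transitivity of $\Gamma$ makes all the $K_2^j$ switch equivalent, and a monochromatic bipartite graph maps onto the corresponding monochromatic $K_2$); you merely traverse the implication cycle in the order $(1)\Rightarrow(3)\Rightarrow(2)\Rightarrow(1)$ rather than $(1)\Rightarrow(2)\Rightarrow(3)\Rightarrow(1)$, and apply the transitivity switch to the target $K_2$ instead of to one side of the bipartition of $G$ as the paper does. These are cosmetic differences; the argument is sound.
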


\begin{proof}
The implication $(1) \Rightarrow (2)$ follows from the fact that $\switchhom{K_2^i}{K_2^j}{\Gamma}$ for any $j\in [m]$ by the transitive action of $\Gamma$.  The implication $(2) \Rightarrow (3)$ is trivial.  Suppose $G$ can be switched to be monochromatic of some colour $j$.  Let $G$ have the bipartition $X \cup Y$. Since $\Gamma$ is transitive, there is $\pi \in \Gamma$ such that $\pi(j) = i$.  Then $G^{(X,\pi)}$ is monochromatic of colour $i$ implying $\switchhom{G}{K_2^i}{\Gamma}$.
\end{proof}

We have reduced the problem of determining whether an $m$-edge coloured graph $G$ is $2$-colourable to testing if $G$ is bipartite and can be switched to be monochromatic of some colour $j$.
	
In the case of signed graphs (two edge colours), $G$ can be switched to be monochromatic of colour $j$ if and only if each cycle of $G$ can be switched to be a monochromatic cycle of colour $j$~\citep{Zaslavsky_1982}.
We shall show the same result holds for bipartite $m$-edge coloured graphs. However, for our setting the question of when a cycle can be switched to be monochromatic is more complicated.  Hence, we begin by characterizing when an $m$-edge coloured even cycle can be made monochromatic.
	%Definition: Nearly monochromatic of colours (i,j)
To this end, let $G$ be a $m$-edge coloured cycle of length $2k$ on vertices $v_0, v_1, \dots, v_{2k-1}, v_0$.  By switching at $v_1$, the edge $v_0v_1$ can be made colour $i$.  Next by switching at $v_2$, the edge $v_1v_2$ can be made colour $i$.  Continuing, we see that $G$ can be switched so that all edges except $v_{2k-1}v_0$ are colour $i$. For $i,j \in [m]$, we say the cycle $G$ is \emph{nearly monochromatic of colours $(i,j)$} if $G$ has $2k-1$ edges of colour $i$ and $1$ edge of colour $j$.   Thus the problem of determining if an even cycle can be switched to be monochromatic is reduced to the problem of determining if a nearly monochromatic cycle of length $2k$ can be switched to be monochromatic.	

	%Definition: \Gamma-substitutable on C_{2k}
Let $G$ be a cycle of length $2k$ that is nearly monochromatic of colours $(i,j)$. We define a relation on $[m]$ by $j \sim_{2k} i$ if $G$ is $\Gamma$-switch equivalent to a monochromatic $C_{2k}$ of colour $i$ or equivalently $\switchhom{G}{K_2^i}{\Gamma}$.

As the definition suggests, the relation is an equivalence relation.

	\begin{lemma}
		The relation $\sim_{2k}$ is an equivalence relation.
	\end{lemma}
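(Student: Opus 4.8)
The plan is to reduce the statement to switchability-to-monochromatic and then supply the three relational properties. Since the cycle $G$ underlying the relation is even, hence bipartite, Proposition~\ref{prop:maptoanyK2} lets me restate the defining condition colour-freely: $j \sim_{2k} i$ holds if and only if the $(i,j)$-nearly-monochromatic cycle $G$ can be switched to be monochromatic of \emph{some} colour. In this form reflexivity is immediate, since for $j=i$ the cycle $G$ already has all $2k$ edges of colour $i$; thus $i \sim_{2k} i$. The real content is therefore symmetry and transitivity.

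To obtain these I would attach a group-valued invariant to the switch class of a cycle. Writing the edge colours as $c_1,\dots,c_{2k}$ around $v_0,\dots,v_{2k-1}$ and switching once at each vertex, in the cyclic order $v_0,v_1,\dots,v_{2k-1}$, with elements $a_0,\dots,a_{2k-1}\in\Gamma$, the edge $v_{t-1}v_t$ receives colour $a_t a_{t-1}(c_t)$ for $t<2k$, while the closing edge $v_{2k-1}v_0$ receives colour $a_{2k-1}a_0(c_{2k})$. My first step is to show that whether $G$ can be switched to a monochromatic colouring is already decided within this canonical single-switch-per-vertex form, so that the outcome is controlled entirely by the ordered ``holonomy'' coming from the closing edge. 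Packaging the products $a_t a_{t-1}$ and their reverse-order counterparts through $\productf{\cdot}{\cdot}$ and $\productb{\cdot}{\cdot}$, switchability to colour $c$ becomes the solvability of the system $a_t a_{t-1}(i)=c$ for $t<2k$ together with $a_{2k-1}a_0(j)=c$, which I would recast as a coincidence of $\Gamma$-orbits of ordered pairs, written via $\agree{\cdot}$. Symmetry and transitivity should then be read off this invariant: symmetry by analysing how the holonomy transforms when the switching sequence is reversed (replacing $\Sigma$ by $\Sigma^{-1}$ and the forward products by backward ones), and transitivity by composing two witnessing sequences and using that $\Gamma$-switch equivalence is itself transitive (Proposition~\ref{prop:switchEquivalent}).

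The main obstacle is the non-commutativity of $\Gamma$. In the signed (two-colour) case switching is an involution and a cycle is monochromatic-reachable precisely when a single $\mathbb{Z}_2$-product around it is trivial; here the closing-edge colour is a conjugation-twisted ordered product of the vertex switches, so the colour that necessarily remains after making all other edges colour $i$ is not a single well-defined colour but lies in a coset, and in general $G_{i,j}$ and $G_{j,i}$ are \emph{not} switch equivalent. Consequently symmetry is the delicate point: it cannot come from a naive relabelling swapping $i$ and $j$ (such a swap need not lie in $\Gamma$), and instead requires showing that the invariant is symmetric under inversion, so that the $(i,j)$- and $(j,i)$-cycles are monochromatic-reachable together. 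Establishing that the canonical cyclic-order switching realizes the full reachable set of monochromatic colourings---so that the ordered-product invariant is genuinely complete---is the technical heart on which all three properties rest.
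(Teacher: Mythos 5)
Your setup is fine: using Proposition~\ref{prop:maptoanyK2} to restate $j\sim_{2k}i$ colour-freely is legitimate, and reflexivity is correct. But the strategy you propose for symmetry and transitivity rests on a claim you do not prove and which is the genuinely hard point: that reachability of a monochromatic colouring is already decided by switching \emph{once} at each vertex, so that a single ordered ``holonomy'' product at the closing edge is a complete invariant. For non-Abelian $\Gamma$ this is not available. The effect of an arbitrary switching sequence on the edge $uv$ of a cycle is the temporally ordered product of \emph{all} permutations applied at $u$ or $v$, interleaved with switches elsewhere; the set of edge-transformation tuples realizable on $C_{2k}$ is the subgroup of $\Gamma^{2k}$ generated by the single-vertex switches, and this is in general strictly larger than the set realizable with one switch per vertex (e.g.\ switching at $v_0$ by $a$, then $v_1$ by $b$, then $v_0$ by $c$ produces the word $cba$ on one edge and $ca$ on another, which a one-per-vertex sequence reproduces only under a commutativity condition). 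The paper itself flags this: only ``in the case that $\Gamma$ is Abelian'' can switches be reordered and combined so each vertex is switched once, and the classes $\agree{i}$ are computed by transitive closure over a reconfiguration graph precisely because no closed-form product criterion is given. You identify this completeness claim as ``the technical heart,'' but identifying it is not supplying it, and without it neither symmetry nor transitivity follows.

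For contrast, the paper avoids any normal form. Symmetry is an explicit construction: starting from the $(j,i)$-cycle, switch at the alternating vertices $v_1,v_3,\dots,v_{2k-3}$ with a single $\pi$ satisfying $\pi(j)=i$ to obtain an $(i,j)$-nearly-monochromatic cycle, apply the hypothesis, and finish with Proposition~\ref{prop:maptoanyK2}; no inversion-symmetry of an invariant is needed. Transitivity applies the witnessing sequence $\Sigma'$ for the $(l,j)$-cycle to the $(l,i)$-cycle, observes that the closing edge is off by some $\sigma\in\Gamma$ with $\sigma(j)=l$, and then undoes $\sigma$ globally by replaying the inverses of the switches at $v_0$ and $v_{2k-1}$ (in reverse order) on the entire even and odd sides of the bipartition respectively --- a lift that works for arbitrarily long, arbitrarily interleaved sequences. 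If you want to salvage your approach, you would either have to restrict to Abelian $\Gamma$ or prove the completeness of your canonical form for cycles, which is a substantial (and, as stated, doubtful) lemma in its own right.
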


	\begin{proof}
The relation is trivially reflexive.
	
To see $\sim_{2k}$ is symmetric, assume $j \sim_{2k} i$. Let $G$ be a cycle of length $2k$ that is nearly monochromatic of colour $(j,i)$.  Label the vertices of the cycle in the natural order as $v_0, v_1, \dots, v_{2k-1},v_0$ where $v_0v_{2k-1}$ is the unique edge of colour $i$.  Suppose $\pi(j)=i$. Let $\Sigma = (v_1, \pi), (v_3, \pi), \dots, (v_{2k-3}, \pi)$. Then $G^\Sigma$ is nearly monochromatic of colour $(i,j)$, with edge $v_{2k-2}v_{2k-1}$ being the unique edge of colour $j$.  By assumption there is a sequence of switches, say $\Sigma'$, so that $G^{\Sigma \Sigma'}$ is monochromatic of colour $i$, giving $\switchhom{G}{K_2^i}{\Gamma}$.  Thus, $\switchhom{G}{K_2^j}{\Gamma}$ by Proposition~\ref{prop:maptoanyK2}.  That is, $G$ can be made monochromatic of colour $j$ or $i \sim_{2k} j$.
		
To prove $\sim_{2k}$ is transitive, suppose $i \sim_{2k} j$ and $j \sim_{2k} l$. Let $G, G',$ and $G''$ be $m$-edge coloured cycles of length $2k$ each with the vertices $v_0, v_1, \dots, v_{2k-1}$.	(Technically, we are considering three distinct edge colourings of the same underlying graph.)
Suppose $G, G',$ and $G''$ are nearly monochromatic of colours $(j,i)$, $(l,j)$, and $(l, i)$ respectively.  
There are $2k-1$ edges of colour $j$ in $G$ with edge $v_0v_{2k-1}$ of colour $i$ in $G$. Similarly there are $2k-1$ edges of colour $l$ in $G'$ with edge $v_0v_{2k-1}$ of colour $j$ in $G'$ and $2k-1$ edges of colour $l$ with edge $v_0v_{2k-1}$ of colour $i$ in $G''$.  We shall show $G''$ can be switched to be monochromatic of colour $l$.
		
		By hypothesis, there is a sequence $\Sigma'$ such that $G'^{\Sigma'}$ is monochromatic of colour $l$. In particular, under $\Sigma'$ all edges of colour $l$ remain colour $l$, and the edge $v_0v_{2k-1}$ changes from $j$ to $l$.  Thus, if we apply $\Sigma'$ to $G''$ the edges of colour $l$ remain colour $l$ and the product of those switches at $v_0$ and $v_{2k-1}$ changes $v_0v_{2k-1}$ from colour $i$ to colour $\sigma(i)$ for some $\sigma \in \Gamma$. We observe by the fact that $G'^{\Sigma'}$ is monochromatic, $\sigma(j) = l$. 

	We now construct a modified inverse of $\Sigma'$. Let $\Sigma''$ be the subsequence of $\Sigma'$ consisting of the switches only at $v_0$ or $v_{2k-1}$.  That is, $\Sigma''$ is a subsequence $(v_{s_0}, \pi_0), (v_{s_1}, \pi_1), \dots, (v_{s_t},\pi_t)$ where each $v_{s_r} \in \{ v_0, v_{2k-1} \}$. 
	Let $X$ (respectively $Y$) be the set of vertices of $G''$ with even (respectively odd) subscripts.  Starting with $G''^{\Sigma'}$ apply the following sequence of switches.  For $r = t, t-1, \dots, 0$, if $v_{s_r} = v_0$, then apply the switch $(X, \pi_{r}^{-1})$; otherwise, $v_{s_r}=v_{2k-1}$ and apply the switch $(Y,\pi_{r}^{-1})$.
The net effect is to apply $\sigma^{-1}$ to each edge of $G''^{\Sigma'}$. Thus each edge of colour $l$ switches to $j$ and the edge $v_0v_{2k-1}$ of colour $\sigma(i)$ becomes colour $i$. That is, we can switch $G''$ to be $G$. By hypothesis, $G$ can be switched to be monochromatic of colour $j$. By Proposition~\ref{prop:maptoanyK2}, the resulting $m$-edge coloured graph can be switched to be monochromatic of colour $l$, i.e., $i \sim_{2k} l$, as required.
	\end{proof}

We denote the equivalence classes with respect to $\sim_{2k}$ by $\agree{i}^{2k} = \{j | j \sim_{2k} i\}$. 
We now show that these classes are independent of cycle length (for even length cycles).

\begin{figure}

\begin{center}
\begin{tabular}{ccc}
\includegraphics[scale=0.55]{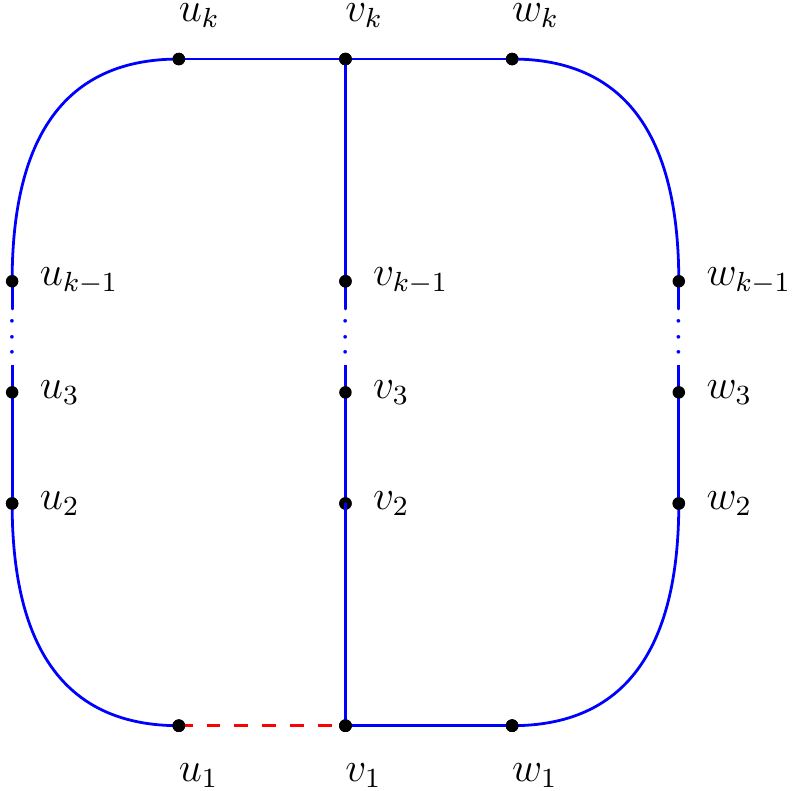} &
\includegraphics[scale=0.55]{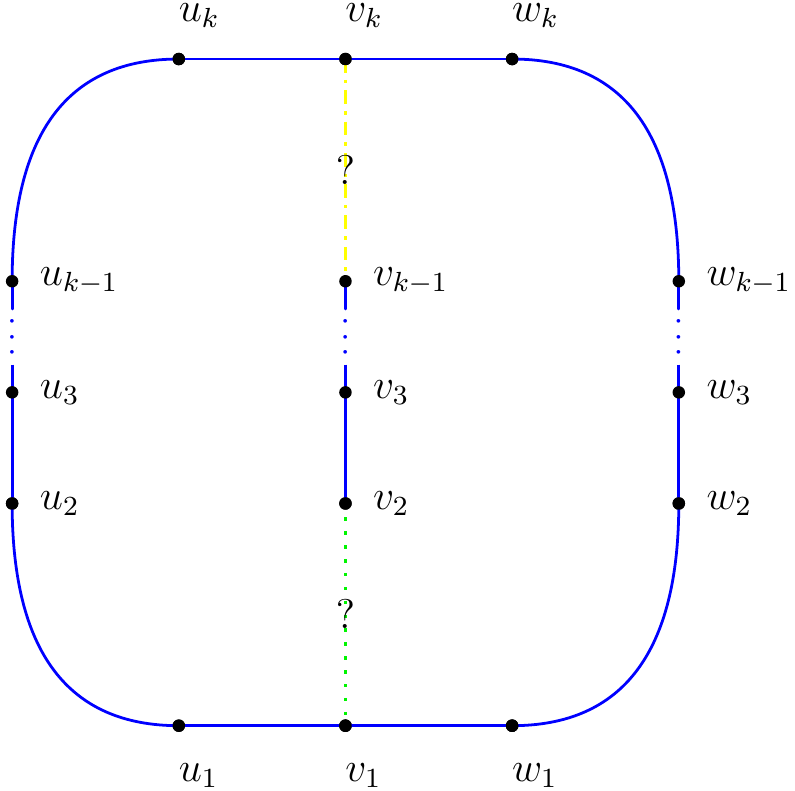} & 
\includegraphics[scale=0.55]{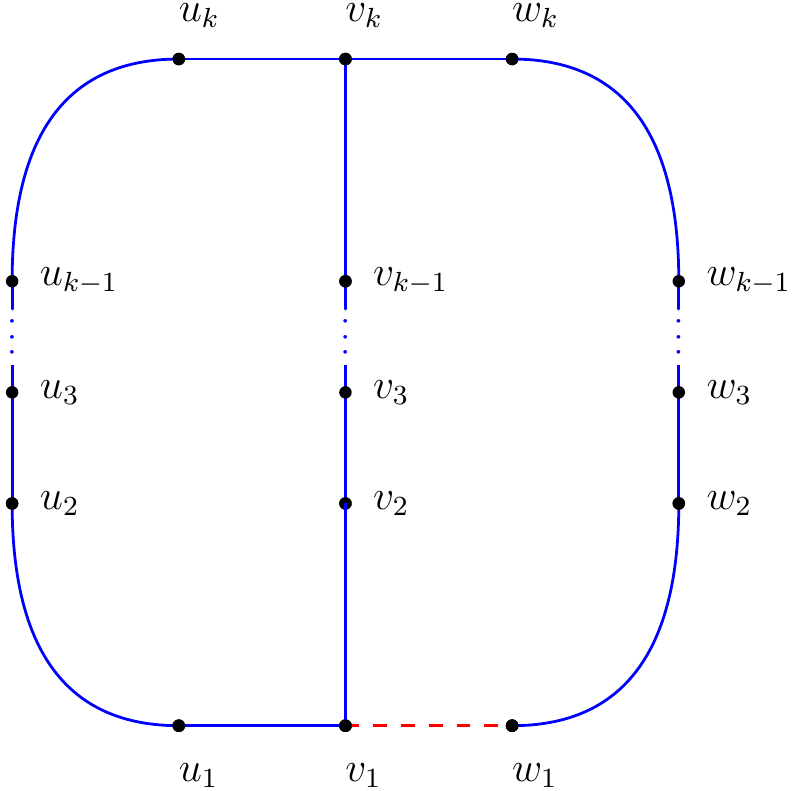} \\
$G$ & $G^{\Sigma}$ & $G^{\Sigma\Sigma'}$
\end{tabular}
\end{center}

\caption{Switching of the theta graph in Theorem~\ref{thm:4eq2k}.  Solid blue edges are colour $i$ and dashed red edges are colour $j$.}\label{fig:theta}
\end{figure}

\begin{theorem}\label{thm:4eq2k}%unchecked
Let $\Gamma \leq S_m$ and $i \in [m]$.  Then $\agree{i}^{2l} = \agree{i}^{2k}$ for all $l, k \in \{2,3, \dots \}$. 
\end{theorem}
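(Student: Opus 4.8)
The plan is to prove the two inclusions $\agree{i}^{2k}\subseteq\agree{i}^{2l}$; since $k$ and $l$ play symmetric roles, establishing one inclusion for all $k,l\ge 2$ yields equality of all the classes simultaneously. So fix $k,l\ge 2$, suppose $j\sim_{2k}i$, and aim to show $j\sim_{2l}i$. First I would build the theta graph $T$ of Figure~\ref{fig:theta}: two branch vertices $u,w$ joined by three internally disjoint paths $P_1,P_2,P_3$ of lengths $2k-1$, $1$, and $2l-1$, where $P_2$ is the single edge $uw$. Colour the edge $P_2$ with $j$ and every edge of $P_1\cup P_3$ with $i$. Since all three path lengths are odd, the three cycles of $T$ have even lengths $2k$, $2l$, and $2(k+l-1)$; in particular $C_{2k}:=P_1\cup P_2$ and $C_{2l}:=P_3\cup P_2$ are both nearly monochromatic of colours $(i,j)$ and share the unique $j$-edge $P_2$.

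By hypothesis $C_{2k}$ is $\Gamma$-switch equivalent to a monochromatic cycle of colour $i$; let $\Sigma$ be a switching sequence, supported on the vertices of $C_{2k}$ (that is, $u$, $w$, and the internal vertices of $P_1$), with $(C_{2k})^\Sigma$ monochromatic of colour $i$. Applying $\Sigma$ to all of $T$ (the passage $G\to G^\Sigma$ in Figure~\ref{fig:theta}) leaves every edge of $P_1$ and the edge $P_2$ coloured $i$, and alters $P_3$ only at its two end edges, since $\Sigma$ switches no internal vertex of $P_3$. Writing $g_u,g_w\in\Gamma$ for the net effect of $\Sigma$ at $u$ and $w$, the end edges of $P_3$ acquire colours $g_u(i)$ and $g_w(i)$. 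Crucially, $\Sigma$ meets $C_{2l}$ only at $u$ and $w$, so its restriction to $C_{2l}$ is itself a switching of $C_{2l}$; hence $C_{2l}$ is switchable to colour $i$ if and only if $(C_{2l})^\Sigma$ is. It therefore remains to switch $(C_{2l})^\Sigma$ — an otherwise $i$-coloured cycle carrying the two defects $g_u(i)$ and $g_w(i)$ — to monochromatic colour $i$, which is the role of the final switching $\Sigma'$ in Figure~\ref{fig:theta}.

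The main obstacle is exactly this reconciliation of the two residual defects, and here the two lengths must be shown not to matter. The naive move of undoing $g_u,g_w$ at $u,w$ fails, because those switches also disturb the shared edge $P_2$; and because $\Gamma$ need not be abelian, $g_u,g_w$ depend on the interleaving of the switches of $\Sigma$ at $u$ and $w$, so they are not pinned down by the single fact that $P_2$ went from $j$ to $i$. The way I would close the gap is to attach to each even cycle a conserved quantity visibly independent of its length: orienting the cycle and lifting each edge colour $c_t$ to a group element $h_t\in\Gamma$ with $h_t(i)=c_t$, one forms the alternating product $w=\prod_t h_t^{(-1)^t}$ and reads it in the abelianization $\Gamma^{\mathrm{ab}}$ modulo the image of the stabiliser of $i$. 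A direct check shows each elementary switch leaves this value unchanged, that a monochromatic cycle has trivial value, and that for a nearly monochromatic cycle of colours $(i,j)$ the value equals the class of $g_j$ (where $g_j(i)=j$) \emph{regardless of the cycle's length}. Consequently $j\sim_{2k}i$ forces this class to be trivial, which is precisely the condition making the length-$2l$ nearly monochromatic cycle — equivalently $(C_{2l})^\Sigma$ after sliding its two defects together along $P_3$ and combining them at an internal vertex — switchable to colour $i$, giving $j\sim_{2l}i$ and hence $\agree{i}^{2k}=\agree{i}^{2l}$. The one genuinely technical point, which I expect to be the crux of the write-up, is the sufficiency half of this invariant — that triviality of the class actually guarantees a switching to monochromatic — which amounts to showing the switching orbit of the all-$i$ colouring is exactly the set of colourings with trivial invariant.
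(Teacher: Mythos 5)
Your write-up correctly isolates the obstacle (the two residual defects $g_u(i)$, $g_w(i)$ left on $C_{2l}$ after switching $C_{2k}$ monochromatic), but the argument you offer to overcome it is not complete, and the missing piece is the entire content of the theorem. The abelianized alternating product you define is indeed a switching invariant, and its computation on monochromatic and nearly monochromatic cycles is routine; but this only gives the \emph{necessity} half: $j\sim_{2k}i$ implies $[g_j]$ is trivial in $\Gamma^{\mathrm{ab}}$ modulo the image of the stabiliser of $i$. To transfer membership from $\agree{i}^{2k}$ to $\agree{i}^{2l}$ you must also show that triviality of this class implies the length-$2l$ nearly monochromatic cycle \emph{can} be switched monochromatic, and you explicitly defer exactly this step. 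Without it, nothing follows: a length-independent necessary condition cannot by itself show $j\sim_{2l}i$. (The sufficiency claim does appear to be true --- one can reach any target colouring with trivial invariant by a greedy pass around the cycle followed by realizing an arbitrary element of the commutator subgroup on a single edge via switch sequences of the form $(v_0,a),(v_1,b),(v_0,a^{-1}),(v_1,b^{-1})$ --- but that argument is essentially a full proof of the theorem by other means, not a detail to be filled in later.)

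Two further points of comparison with the paper's proof. First, your theta graph is set up differently: you take outer paths of lengths $2k-1$ and $2l-1$ sharing only the single $j$-coloured edge, which is precisely what produces the two-defect problem. The paper instead takes two outer paths of \emph{equal} length $k+1$ and a middle path of length $k-1$, with the $j$-edge on one outer path; the equal lengths give an automorphism $\varphi$ exchanging the two outer paths, and applying the reversed, inverted, $\varphi$-mirrored copy of $\Sigma$ after $\Sigma$ cancels the residual damage and leaves the length-$2k$ cycle $C_1$ monochromatic. This handles only the single step $\agree{i}^{2k+2}\subseteq\agree{i}^{2k}$, with the general statement following by induction; the reverse inclusion $\agree{i}^{4}\subseteq\agree{i}^{2k}$ is obtained for free from the homomorphism $C_{2k}\to C_4$ and composition of $\Gamma$-switchable homomorphisms, a step your proposal does not exploit. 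Second, your opening claim that the two inclusions are symmetric in $k$ and $l$ is fine in principle, but note that the paper's two inclusions are proved by genuinely different mechanisms, and your single proposed mechanism (the invariant) is the one whose decisive half is missing.
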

	
	\begin{proof}
Let $i \in [m]$ and let $k$ be an integer $k \geq 2$.
We show $\agree{i}^4 = \agree{i}^{2k}$ from which the result follows.

Suppose $j \in \agree{i}^4$.  Let $G$ be a cycle of length $2k$ and $H$ a cycle of length $4$ where both are nearly monochromatic of colours $(i, j)$.  Since $G \to H$ and by hypothesis, $\switchhom{H}{K_2^i}{\Gamma}$, we have $\switchhom{G}{K_2^i}{\Gamma}$ and thus $j \in \agree{i}^{2k}$.

Conversely, suppose $j \in \agree{i}^{2k+2}$.  We will show $j \in \agree{i}^{2k}$ from which we can conclude by induction that $j \in \agree{i}^4$.
Let $G$ be the $m$-edge coloured graph constructed as follows.  Let $v_1, v_2, \dots, v_k$; $u_1, u_2, \dots, u_k$; and $w_1, w_2, \dots, w_k$ be three disjoint paths of length $k-1$.  Join $v_1$ to both $u_1$ and $w_1$, and $v_k$ to both $u_k$ and $w_k$. Each edge is colour $i$ with the exception of $v_1u_1$ which is colour $j$. (Thus, $G$ is the $\theta$-graph with path lengths $k+1, k-1, k+1$.)
Denote the cycles $u_1, \dots, u_k, v_k, \dots, v_1, u_1$ and $w_1, \dots, w_k, v_k, \dots, v_1, w_1$ by $C_1$ and $C_2$ respectively.  Observe both have length $2k$, $C_1$ is nearly monochromatic of colours $(i,j)$ and $C_2$ is monochromatic of colour $i$.  Finally, let $C_3$ be the cycle $u_1, \dots, u_k, v_k, w_k, \dots, w_1, v_1, u_1$.  The cycle $C_3$ has length $2k+2$ and is nearly monochromatic of colours $(i,j)$.  See Figure~\ref{fig:theta}.
		
By assumption there exists a sequence of switches $\Sigma$ (acting on the vertices of $C_3$) such that in $G^\Sigma$ the cycle $C_3$ is monochromatic of colour $i$. We note that $v_1v_2$ and $v_{k-1}v_k$ might not be of colour $i$ in $G^\Sigma$. 
		
		There is an automorphism $\varphi$ of the underlying graph $G$ that fixes each $v_l$, $l = 1, 2, \dots, k$, and interchanges each $u_l$ with $w_l$.  We apply $\Sigma^{-1}$ to $\varphi(G^\Sigma)$ as follows.  Let $\Sigma'$ be the sequence obtained from $\Sigma$ by reversing the order of the sequence, replacing each permutation with its inverse permutation and replacing all switches on vertices $u_l$ with switches on $w_l$ and vice versa. (Switches on $v_1$ and $v_k$ are applied to $v_1$ and $v_k$ respectively.) Then in $G^{\Sigma\Sigma'}$ we see that $C_1$ is monochromatic of colour $i$.
		Therefore $\agree{i}^{2k} \supseteq \agree{i}^{2k+2}$ for all $k\geq 2$. We conclude $\agree{i}^4 = \agree{i}^{2k}$ for all $k \geq 2$.
	\end{proof}
	
As the equivalence classes depend only on the group and not the length of the cycle, we henceforth denote these classes as $\agree{i}$. 
If $j \in \agree{i}$, we say $i$ can be \emph{$\Gamma$-substituted} for $j$; that is, the single edge of colour $j$ in the cycle can be switched to colour $i$.  We call $\agree{i}$ the \emph{$\Gamma$-substitution class} for $i$.

For a fixed $m$ and $\Gamma$, $\agree{i}$ can be computed in constant time as there is a constant number of $m$-edge coloured $4$-cycles, and a constant number of (single) switches that can be applied to these cycles, from which the equivalence classes can be computed using the transitive closure.
	
	\begin{theorem}
Let $G$ be an $m$-edge coloured $C_{2k}$. It can be determined in polynomial time whether there is a $\Gamma$-switchable homomorphism of $G$ to $K_2^i$.
	\end{theorem}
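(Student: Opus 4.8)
The plan is to reduce $G$ to a \emph{nearly monochromatic} cycle by an explicit sequence of switches, read off the colour of the single remaining edge, and then decide membership in the $\Gamma$-substitution class $\agree{i}$. The correctness will follow from Proposition~\ref{prop:maptoanyK2} together with the invariance of the substitution classes established in Theorem~\ref{thm:4eq2k}.

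Concretely, write $G = C_{2k}$ on vertices $v_0, v_1, \dots, v_{2k-1}, v_0$. First I would process the edges in order: for $t = 1, 2, \dots, 2k-1$, switch at $v_t$ with some $\pi_t \in \Gamma$ chosen so that the edge $v_{t-1}v_t$ becomes colour $i$. Such a $\pi_t$ exists because $\Gamma$ acts transitively on $[m]$ (the standing assumption of this section), and for fixed $m$ and $\Gamma$ it can be located in constant time by searching $\Gamma$. Since the switch at $v_t$ affects only the two edges incident with $v_t$, and we never revisit $v_{t-1}$, each edge $v_{t-1}v_t$ with $1 \le t \le 2k-1$ is set to colour $i$ and left untouched thereafter. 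As we never switch at $v_0$, the only edge whose colour is not forced to $i$ is $v_{2k-1}v_0$; call its resulting colour $j$. Thus after this sequence $\Sigma$ of at most $2k-1$ switches, $G^\Sigma$ is nearly monochromatic of colours $(i,j)$, and the whole reduction runs in $O(k)$ time.

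For correctness, observe first that switch-equivalent graphs admit a $\Gamma$-switchable homomorphism to $K_2^i$ simultaneously: if $(G^\Sigma)^{\Sigma'} \to K_2^i$ then $G^{\Sigma\Sigma'} \to K_2^i$, and conversely. Hence $\switchhom{G}{K_2^i}{\Gamma}$ if and only if $\switchhom{G^\Sigma}{K_2^i}{\Gamma}$. By the very definition of the relation $\sim_{2k}$, the nearly monochromatic cycle $G^\Sigma$ of colours $(i,j)$ satisfies $\switchhom{G^\Sigma}{K_2^i}{\Gamma}$ exactly when $j \sim_{2k} i$, i.e.\ when $j \in \agree{i}^{2k}$; and by Theorem~\ref{thm:4eq2k} this class equals $\agree{i}$. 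The algorithm therefore answers \textsc{yes} precisely when $j \in \agree{i}$. Since $\agree{i}$ can be computed in constant time and the reduction is linear, the total running time is $O(k)$, which is polynomial (indeed linear) in the size of $G$.

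I do not expect a serious obstacle here, as the heavy lifting has already been done by Proposition~\ref{prop:maptoanyK2} and Theorem~\ref{thm:4eq2k}. The one point requiring care is the consistency of the reduction: different choices of the $\pi_t$ could in principle leave different colours $j$ on the final edge. This causes no difficulty, however, because any two such outcomes are switch-equivalent to $G$ and hence to each other, so their membership in $\agree{i}$ must agree; thus any valid reduction yields a correct decision, and no canonical choice of the $\pi_t$ need be specified.
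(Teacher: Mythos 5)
Your proposal is correct and follows essentially the same route as the paper: switch along the cycle at $v_1,\dots,v_{2k-1}$ to reach a nearly monochromatic cycle of colours $(i,j)$, then test $j\in\agree{i}$, with correctness resting on the definition of $\sim_{2k}$ and Theorem~\ref{thm:4eq2k}. Your added remarks on the existence of the $\pi_t$ (from transitivity of $\Gamma$) and on the independence of the answer from the particular choices made are just explicit versions of what the paper leaves implicit.
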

	
	\begin{proof}
As described above, we can switch $G$ to be nearly monochromatic of colours $(i,j)$, for some $j$. Then $\switchhom{G}{K_2^i}{\Gamma}$ if and only if $j \in \agree{i}$. Testing this condition can be done in constant time.
	\end{proof}

We now show the \textsc{$\Gamma$-Hom-$K_2^i$} problem is polynomial time solvable.  This is accomplished by showing the problem of determining whether a given $m$-edge coloured bipartite graph can be made monochromatic of colour $i$ is polynomial time solvable.
		
We begin with the following observation that trees can always be made monochromatic.
	
\begin{lemma}\label{lem:tree}
Let $T$ be a $m$-edge coloured tree, then for any $\Gamma$, $\switchhom{T}{K_2^i}{\Gamma}$.
\end{lemma}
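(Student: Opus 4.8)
The plan is to show any $m$-edge coloured tree $T$ can be switched to be monochromatic of colour $i$, which by Proposition~\ref{prop:maptoanyK2} is equivalent to $\switchhom{T}{K_2^i}{\Gamma}$. The natural approach is induction on the number of edges (equivalently, on $|V(T)|$). The base case is a single edge $K_2^j$: since $\Gamma$ acts transitively on $[m]$, there is $\pi \in \Gamma$ with $\pi(j) = i$, and switching at either endpoint with $\pi$ makes the edge colour $i$.

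For the inductive step, I would root $T$ at any vertex and select a leaf $u$ with neighbour $w$, so that $uw$ is an edge of some colour $j$. Let $T' = T - u$, which is a tree on fewer vertices. By the induction hypothesis, there is a sequence of switches $\Sigma'$ (acting on $V(T')$) so that $T'^{\Sigma'}$ is monochromatic of colour $i$. Applying the same sequence $\Sigma'$ to $T$ leaves every edge of $T'$ recoloured to $i$ and changes the pendant edge $uw$ to some colour $j'$ (since the switches at $w$ may permute its colour, while $u$ is untouched). Now $T^{\Sigma'}$ differs from a monochromatic tree only in the single pendant edge $uw$ of colour $j'$.

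The final step handles that lone pendant edge. Because $u$ is a leaf, switching at $u$ with respect to any $\pi \in \Gamma$ affects only the edge $uw$ and no other edge of $T$. Choosing $\pi$ with $\pi(j') = i$ (possible by transitivity of $\Gamma$ on $[m]$), the switch $(u, \pi)$ recolours $uw$ to $i$ while leaving all other edges colour $i$. Hence $T^{\Sigma' (u,\pi)}$ is monochromatic of colour $i$, completing the induction.

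I do not expect any serious obstacle here: the key structural fact is simply that switching at a leaf touches exactly one edge, so we can correct the colour of a pendant edge in isolation without disturbing the rest of the tree. The only point requiring care is ensuring that applying $\Sigma'$ (defined on the vertices of $T'$) to all of $T$ behaves as claimed on the shared vertices; but since switching at a vertex only affects edges incident to it, and $u \notin V(T')$, the edges internal to $T'$ transform exactly as they do in $T'$, and the pendant edge $uw$ is merely carried along and then fixed at the end. This mirrors the standard peeling argument for trees and leverages transitivity of $\Gamma$ exactly as in Proposition~\ref{prop:maptoanyK2}.
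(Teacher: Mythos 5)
Your proof is correct and is essentially the paper's argument run in reverse: the paper orders the vertices parent-first and switches at each newly added vertex (which at that moment is a leaf of the growing subtree) to recolour its single parent edge, whereas you peel off a leaf, induct, and then fix the pendant edge with one switch. Both hinge on the same observation that a switch at a leaf touches only one edge, together with the standing transitivity assumption on $\Gamma$.
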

	
\begin{proof}
Let $T$ be a $m$-edge coloured tree. Let $v_1,v_2, \ldots, v_{|T|}$ be an ordering of $V(T)$ rooted at $v_1$ where each parent appears before all of its children (for example DFS or BFS will work). For each $k = 2,\ldots,|T|$, switch at $v_k$ so that the edge from $v_k$ to its parent in the depth first search ordering has colour $i$.
We observe that if the subtree $T[v_1,\ldots, v_{k-1}]$ is monochromatic of colour $i$, then after switching at $v_k$, so is the subtree $T[v_1,\ldots, v_k]$.
\end{proof}

Let $G$ and $H$ be $m$-edge coloured graphs such that $H$ is a subgraph of $G$. A \emph{retraction} from $G$ to $H$, is a homomorphism $r:G\to H$ such that $r(x)=x$ for all $x\in V(H)$. 
We use the following result of Hell.
	
\begin{theorem}[\cite{Hell_1972}]\label{thm:hell}
Let $G$ be a bipartite graph. Suppose $P$ is a shortest path from $u$ to $v$ in $G$. Then $G$ admits a retraction to $P$.
\end{theorem}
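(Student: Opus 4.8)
The plan is to build the retraction explicitly from the distance function of $G$, exploiting bipartiteness. Write the shortest path as $P\colon u=p_0,p_1,\dots,p_\ell=v$ with $\ell=\operatorname{dist}_G(u,v)$, and set $a(x)=\operatorname{dist}_G(u,x)$ for every vertex $x$. Two facts drive the argument. First, because $P$ is a \emph{shortest} $u$--$v$ path, every prefix $p_0,\dots,p_i$ is itself shortest, so $a(p_i)=i$ for each $i$; otherwise a shorter $u$--$p_i$ walk concatenated with the suffix $p_i,\dots,p_\ell$ would produce a $u$--$v$ walk of length below $\operatorname{dist}_G(u,v)$. Second, since $G$ is bipartite, any two vertices in the same part have $a$-values of the same parity; as the ends of an edge lie in opposite parts and satisfy $|a(x)-a(y)|\le 1$ by the triangle inequality, we get $|a(x)-a(y)|=1$ \emph{exactly} across every edge.

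Next I would compose $a$ with a folding map. Let $\phi\colon\mathbb{Z}_{\ge 0}\to\{0,1,\dots,\ell\}$ be the triangle wave of period $2\ell$, namely $\phi(t)=t$ for $0\le t\le\ell$, $\phi(t)=2\ell-t$ for $\ell\le t\le 2\ell$, and $\phi(t+2\ell)=\phi(t)$. Its crucial property is that consecutive integers are always sent to integers differing by exactly $1$, including at the turning points $t=\ell$ and $t=2\ell$, where the two neighbours fold onto a common value one step inside the range. Define the candidate map by $r(x)=p_{\phi(a(x))}$.

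Then $r$ is a homomorphism $G\to P$: if $xy$ is an edge, the bipartite fact gives $|a(x)-a(y)|=1$, hence $|\phi(a(x))-\phi(a(y))|=1$, so $r(x)$ and $r(y)$ are consecutive vertices of $P$ and therefore adjacent. Moreover $r$ fixes $P$ pointwise, since $a(p_i)=i\le\ell$ gives $\phi(a(p_i))=i$ and thus $r(p_i)=p_i$. This makes $r$ the desired retraction.

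I expect the only delicate point to be the interaction of the two ``exactness'' requirements. The map $\phi$ is a homomorphism of the infinite path to $P$ precisely because it never stalls, i.e.\ consecutive integers never collapse to a common value, and this is what forces $r$ to send edges to edges rather than to loops. That non-stalling is usable only because bipartiteness guarantees the \emph{exact} unit jump $|a(x)-a(y)|=1$: were an edge to join two vertices at equal distance from $u$ (impossible here, but possible for non-bipartite $G$), $r$ would send it to a single vertex and fail to preserve adjacency. So the heart of the argument is recognizing that bipartiteness is exactly the hypothesis that makes the distance-plus-folding construction adjacency-preserving.
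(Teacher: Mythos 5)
Your proof is correct, but there is nothing in the paper to compare it against: Theorem~\ref{thm:hell} is stated as a known result, attributed to Hell (1972), and no proof is given in the text. Your distance-plus-folding construction is the standard argument for why an isometric (shortest) path in a bipartite graph is a retract: the key points --- that $a(p_i)=i$ because subpaths of shortest paths are shortest, that bipartiteness upgrades $|a(x)-a(y)|\le 1$ to $|a(x)-a(y)|=1$ on every edge, and that the triangle wave $\phi$ of period $2\ell$ maps consecutive integers to consecutive indices --- are all present and correctly justified, so $r(x)=p_{\phi(a(x))}$ is indeed an edge-preserving map fixing $P$ pointwise. Two implicit hypotheses are worth flagging, though neither is a real gap in context: you need $G$ connected for $a$ to be defined everywhere (in the paper's application $G$ is a spanning tree plus some cotree edges, hence connected; for a disconnected $G$ one would send each other component into the edge $p_0p_1$ via its $2$-colouring), and you need $\ell\ge 1$ so that the period-$2\ell$ wave is nondegenerate (if $u=v$ the statement itself fails for any $G$ with an edge, so $u\ne v$ is clearly intended, and it holds where the theorem is invoked).
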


We now show, for general $m$-edge coloured graphs $G$, testing if $\switchhom{G}{K_2^i}{\Gamma}$ comes down to testing if each cycle admits a $\Gamma$-switchable homomorphism to $K_2^i$.  To this end define $\mathcal{C}(G)$ to be the set of cycles in an $m$-edge coloured graph $G$, and $\mathcal{F}_\Gamma$ to be the collection of cycles $C$ such that $\nswitchhom{C}{K_2^i}{\Gamma}$.

	\begin{theorem}\label{thm:tfae}
	Let $G$ be a connected $m$-edge coloured graph and $\Gamma$ a transitive group acting on $[m]$.  Suppose $i \in [m]$. The following are equivalent.
	\begin{list}{(\arabic{enumi})}{\usecounter{enumi}}
			\item $\switchhom{G}{K_2^i}{\Gamma}$.
			\item For all cycles $C \in \mathcal{C}(G)$, $\switchhom{C}{K_2^i}{\Gamma}$.
			\item $G$ is bipartite and for any spanning tree $T$ of $G$, there is a switching sequence $\Sigma$ such that in $G^\Sigma$, $T$ is monochromatic of colour $i$ and for each cotree edge the colour $i$ can be $\Gamma$-substituted for the colour of the cotree edge.
			\item For all cycles $C\in\mathcal{F}_\Gamma$, $\nswitchhom{C}{G}{\Gamma}$
		\end{list}
	\end{theorem}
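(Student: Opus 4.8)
The plan is to prove the cycle of implications $(1)\Rightarrow(2)\Rightarrow(3)\Rightarrow(1)$ and then to fold in $(4)$ by establishing $(1)\Rightarrow(4)$ and $(4)\Rightarrow(2)$. Three of these are short and rely only on general features of $\Gamma$-switchable homomorphisms. For $(1)\Rightarrow(2)$, if $f\colon G^\Sigma\to K_2^i$ witnesses $\switchhom{G}{K_2^i}{\Gamma}$, then for a cycle $C\in\mathcal C(G)$ the switches of $\Sigma$ at vertices outside $V(C)$ do not affect the edges of $C$; restricting $\Sigma$ to $V(C)$ and restricting $f$ to $V(C)$ gives $\switchhom{C}{K_2^i}{\Gamma}$. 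For $(1)\Rightarrow(4)$, since $\Gamma$-switchable homomorphisms compose, any cycle $C$ with $\switchhom{C}{G}{\Gamma}$ would yield $\switchhom{C}{K_2^i}{\Gamma}$, so no $C\in\mathcal F_\Gamma$ maps to $G$. For $(4)\Rightarrow(2)$ I argue contrapositively: if some $C\in\mathcal C(G)$ satisfies $\nswitchhom{C}{K_2^i}{\Gamma}$ then $C\in\mathcal F_\Gamma$, while the inclusion of the subgraph $C$ into $G$ is a homomorphism, giving $\switchhom{C}{G}{\Gamma}$ and contradicting $(4)$.

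For $(2)\Rightarrow(3)$, note first that a homomorphism to $K_2^i$ forces the underlying source to be bipartite, so $(2)$ makes every cycle of $G$ even and hence $G$ bipartite. Fix any spanning tree $T$. By Lemma~\ref{lem:tree} there is a switching sequence $\Sigma$ making $T$ monochromatic of colour $i$ (the procedure there fixes each tree edge to colour $i$ and never disturbs an already-fixed tree edge, regardless of the cotree edges). In $G^\Sigma$ each fundamental cycle $C_e=e+P_e$ is nearly monochromatic of colours $(i,j_e)$, where $j_e$ is the colour of the cotree edge $e$. Since $C_e$ is a cycle of $G$, condition $(2)$ together with the invariance of $\switchhom{\cdot}{K_2^i}{\Gamma}$ under switching gives $\switchhom{C_e}{K_2^i}{\Gamma}$, which by the definition of the substitution class means $j_e\in\agree{i}$. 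Thus $(3)$ holds for $T$, and $T$ was arbitrary.

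The substance is $(3)\Rightarrow(1)$, which I would reduce to a sub-lemma and then induct on the number of cotree edges. \emph{Sub-lemma:} if $H$ is a connected bipartite $m$-edge coloured graph in which every edge is colour $i$ except one non-bridge edge $e=xy$ whose colour lies in $\agree{i}$, then $H$ can be switched to be monochromatic of colour $i$. Granting this, from $(3)$ with tree $T$ and cotree edges $e_1,\dots,e_t$ I delete $e_t$; the same $T$ and the restriction of the data show $G-e_t$ again admits such a tree, so by induction $G-e_t$ switches to monochromatic of colour $i$. Applying that sequence to $G$ leaves every edge colour $i$ except $e_t$. The fundamental cycle of $e_t$ in this new colouring is switch-equivalent, as a cycle, to its copy in $G^\Sigma$ (which is switchable to $K_2^i$), so it too is switchable to $K_2^i$; as its tree path is now all colour $i$, this forces the new colour of $e_t$ to lie in $\agree{i}$, and the sub-lemma finishes. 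The base case $t=0$ is Lemma~\ref{lem:tree}.

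The crux, and the step I expect to be the main obstacle, is the sub-lemma, because switching along the fundamental cycle of $e$ would in general corrupt edges hanging off that cycle. The fix is Hell's retraction theorem (Theorem~\ref{thm:hell}): let $P$ be a shortest $x$--$y$ path in the connected bipartite graph $H-e$ and let $r\colon H-e\to P$ be a retraction. The even cycle $C=P+e$ is nearly monochromatic of colours $(i,j)$ with $j\in\agree{i}$, so by length-independence of the substitution class (Theorem~\ref{thm:4eq2k}) there is a sequence $\Sigma_C$ of switches at vertices of $P$ making $C$ monochromatic of colour $i$. I then pull $\Sigma_C$ back along $r$: each switch $(p,\pi)$ of $\Sigma_C$ is replaced by switching the entire fibre $r^{-1}(p)$ by $\pi$. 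Because $r$ is a loop-free homomorphism, no edge of $H$ has both ends in a common fibre, and every edge $ab$ of $H-e$ satisfies $r(a)r(b)\in E(P)$; consequently $ab$ undergoes exactly the same ordered sequence of permutations as $r(a)r(b)$ does under $\Sigma_C$, while $e$ undergoes the same sequence as $e$ does in $C$. Since $C$ becomes monochromatic of colour $i$, and the edges $ab$ and the path edges all start at colour $i$ while $e$ starts at colour $j$, the pulled-back sequence leaves every edge of $H$ colour $i$. Checking this edge-by-edge correspondence, and in particular that it respects the order of switches so that the argument is valid for non-Abelian $\Gamma$, is the heart of the proof; the conclusion $\switchhom{H}{K_2^i}{\Gamma}$ then follows from Proposition~\ref{prop:maptoanyK2}.
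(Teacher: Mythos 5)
Your proof is correct and follows essentially the same route as the paper: the same trivial/compositional implications for $(1)\Rightarrow(2)$, $(1)\Rightarrow(4)$ and $(4)\Rightarrow(2)$, Lemma~\ref{lem:tree} plus fundamental cycles for $(2)\Rightarrow(3)$, and Hell's retraction theorem with induction on the cotree edges for $(3)\Rightarrow(1)$. The only real difference is presentational: where the paper concludes $(3)\Rightarrow(1)$ by composing $\Gamma$-switchable homomorphisms through the retract $P+e_t$, you inline that composition as an explicit fibre-by-fibre lift of the cycle's switching sequence (essentially the paper's own post-theorem algorithmic remark), and you make explicit the small point the paper leaves implicit, namely that after the intermediate switches the cotree edge's new colour still lies in $\agree{i}$ because its fundamental cycle is switch-equivalent to the original.
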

	
\begin{proof}
We first prove the equivalence of statements (1), (2), and (3).

(1) $\Rightarrow$ (2) is trivially true.

(2) $\Rightarrow$ (3).  We first observe that $G$ must be bipartite as all cycles in the underlying graph map to $K_2$.  Let $T$ be a spanning tree in $G$ and let $\Sigma$ be the switching sequence constructed as in the proof of Lemma~\ref{lem:tree}.  Then $T$ is monochromatic of colour $i$ in $G^\Sigma$.  Let $e$ be a cotree edge of colour $j$.  The fundamental cycle $C_e$ in $T+e$ is nearly monochromatic of colours $(i,j)$.  By hypothesis $\switchhom{C}{K_2^i}{\Gamma}$.  Hence, $i$ $\Gamma$-substitutes for $j$.

(3) $\Rightarrow$ (1).  As above, let $T$ be a spanning tree that is monochromatic of colour $i$ in $G^\Sigma$.  Let $e_1, e_2, \dots, e_k$ be an enumeration of the cotree edges of $T$.  By hypothesis for each cotree edge $e_t$, its colour, say $j$ (in $G^\Sigma$), belongs to $\agree{i}$. 

Let $T+\{ e_1, \dots, e_t \}$ be the subgraph of $G^\Sigma$ induced by the edges of $E(T) \cup \{ e_1, \dots, e_t \}$.  By Lemma~\ref{lem:tree}, $\switchhom{T}{K_2^i}{\Gamma}$.  Suppose $\switchhom{T+\{e_1, \dots, e_{t-1} \} }{K_2^i}{\Gamma}$.  Let $e_t = uv$ have colour $j$.  Let $P$ be a shortest path from $u$ to $v$ in $T+\{e_1, \dots, e_{t-1} \}$.  By~\citep{Hell_1972}, there is a retraction $r: T+\{e_1, \dots, e_{t-1} \}  \to P$ with $r(u) = u$ and $r(v) = v$.  Adding the edge $e_t$ shows $\switchhom{T+\{e_1, \dots, e_{t} \} }{P+e_t}{\Gamma}$ where $P+e_t$ is a nearly monochromatic cycle of colours $(i,j)$.  By assumption $i$ $\Gamma$-substitutes for $j$, so $\switchhom{P+e_t}{K_2^i}{\Gamma}$ and by composition $\switchhom{T+\{e_1, \dots, e_t\}}{K_2^i}{\Gamma}$.  By induction, $\switchhom{G}{K_2^i}{\Gamma}$.

Finally, we show (1) and (4) are equivalent.  If there is $C \in \mathcal{F}_\Gamma$ such that $\switchhom{C}{G}{\Gamma}$, then $\nswitchhom{G}{K_2^i}{\Gamma}$.  Conversely, if $\nswitchhom{G}{K_2^i}{\Gamma}$, then by (2), there is a cycle $C$ in $G$ such that $\nswitchhom{C}{K_2^i}{\Gamma}$.  In particular, $C \in \mathcal{F}_{\Gamma}$ and the inclusion map gives $\switchhom{C}{G}{\Gamma}$.
\end{proof}	

Given an $m$-edge coloured graph $G$, it is easy to test condition (3) for each component. Checking $G$ is bipartite and the switching of a spanning forest can be done in linear time in $|E(G)|$. The look up for each cotree edge requires constant time.  

However, the theorem actually gives us a certifying algorithm which we now outline (under the assumption $G$ is connected). First test if $G$ is bipartite.  If it is not, then we discover an odd cycle certifying a no instance.  Otherwise construct a spanning tree, and switch so that the tree is monochromatic of colour $i$.  Either the colour of each cotree edge belongs to $\agree{i}$ or we discover a cotree edge that does not.   In the latter case we have a cycle of $C \in \mathcal{F}_\Gamma$ that certifies $G$ is a no instance.  

Thus assume all cotree edges have colours in $\agree{i}$. The proof of Theorem~\ref{thm:tfae} provides an algorithm for switching $G$ to be monochromatic of colour $i$ through lifting the switching of the retract $P+e_t$ to all of $G$.  We show how using a similar idea with $C_4$ also works and gives a clearer bound on the running time.  Let $j$ be the colour of a cotree edge, say $uv$.  Recall $j \in \agree{i}^4$.  Let $H$ be a $C_4$ with vertices labelled as  $v_0, v_1, v_2, v_3$ and edges coloured as $v_0 v_3$ is colour $j$ and all other edges are colour $i$.  Let $\Sigma$ be a switching sequence so that $H^\Sigma$ is monochromatic of colour $i$. Let $X$ (respectively $Y$) be the set of vertices of $G$ in the same part of the bipartition as $u$ (respectively $v$).  For each $(v_i, \pi_i)$ in $\Sigma$ we apply the same switch $\pi_i$ in $G$ at $u$ if $v_i = v_0$; at $X \backslash \{ u \}$ if $v_i = v_2$; at $v$ if $v_i = v_3$; and at $Y \backslash \{v \}$ if $v_i = v_1$. At the end of applying all switches in $\Sigma$, edges in $G$ that were of colour $i$ remain colour $i$, and the cotree edge $uv$ switches from $j$ to $i$.  As $|\Sigma|$ is constant (in $|\Gamma|$), this switching sequence for $uv$ requires $O(|V(G)|)$ switches. In this manner the concatenation of $|E(G)|-|V(G)|+1$ such switching sequences (together with the switches required to make $T$ monochromatic) switch $G$ to be monochromatic of colour $i$.  This sequence together with the bipartition of $G$ certifies that $\switchhom{G}{K_2^i}{\Gamma}$.  We have the following.

	\begin{corollary}\label{cor:medge2col}
	The problem \textsc{$\Gamma$-Hom-$K_2^i$} is polynomial time solvable by a certifying algorithm.
	\end{corollary}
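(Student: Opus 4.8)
The plan is to read the corollary off Theorem~\ref{thm:tfae}: the equivalence $(1)\Leftrightarrow(3)$ furnishes a polynomial decision procedure, while $(1)\Leftrightarrow(4)$ supplies a cycle certifying every no-instance. I would treat a connected $G$ first and then handle a general graph component by component, since $\switchhom{G}{K_2^i}{\Gamma}$ holds exactly when it holds on each component, and certificates combine in the obvious way. As preprocessing I would compute the substitution class $\agree{i}$ once; by the remark after Theorem~\ref{thm:4eq2k} this costs only constant time for fixed $m$ and $\Gamma$.

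The decision and no-certificate step is short. First test bipartiteness by breadth-first search. If $G$ is not bipartite the search returns an odd cycle $C$, and since the underlying graph of $K_2^i$ is bipartite we have $C\in\mathcal{F}_\Gamma$; the inclusion $\switchhom{C}{G}{\Gamma}$ is then the required no-certificate via condition $(4)$. If $G$ is bipartite, build a spanning tree $T$ and apply the switching sequence of Lemma~\ref{lem:tree} so that $T$ becomes monochromatic of colour $i$. For each cotree edge $e$ of current colour $j$, its fundamental cycle is nearly monochromatic of colours $(i,j)$, so by Theorem~\ref{thm:tfae} it maps to $K_2^i$ iff $j\in\agree{i}$, a constant-time test. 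If some cotree edge fails, its fundamental cycle lies in $\mathcal{F}_\Gamma$ and its inclusion certifies a no-instance; if all pass, condition $(3)$ holds and $\switchhom{G}{K_2^i}{\Gamma}$.

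To make the yes-answer certifying with a clean running time, I would build an explicit monochromatising switching sequence by the $C_4$-lifting sketched before the statement, processing the cotree edges $e_1,\dots,e_r$ in order. For $e_t=uv$ of current colour $j\in\agree{i}$, fix a constant-length sequence $\Sigma_0$ making a labelled $C_4$ (single edge of colour $j$) monochromatic of colour $i$, and lift it to $G$ by distributing its switches over $u$, $v$, and the two bipartition classes according to the roles of the four $C_4$-vertices. This gives an $O(|V(G)|)$-length block per cotree edge, hence a total sequence of length $O(|V(G)|\,|E(G)|)$; together with the bipartition this is the certificate. The running-time bookkeeping is then routine: bipartiteness and the spanning tree cost $O(|E(G)|)$, the $r=|E(G)|-|V(G)|+1$ cotree lookups cost $O(1)$ each, and the certificate construction dominates at $O(|V(G)|\,|E(G)|)$.

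The main obstacle is verifying that this concatenation of blocks actually works, i.e.\ that the block for $e_t$ does not spoil the edges already set to colour $i$ and leaves every not-yet-processed cotree edge in a state that can still be corrected. The first half is the content of the assertion that the lift fixes all colour-$i$ edges: each such edge plays one of the three colour-$i$ roles of the $C_4$, so the net permutation it receives fixes $i$; in particular previously corrected cotree edges are never undone. The second, more delicate half is that an uncorrected cotree edge, though its colour may change, receives a net permutation lying in the stabiliser $\mathrm{Stab}_\Gamma(i)$ (its $C_4$-role begins and ends at colour $i$), so it suffices to check that $\agree{i}$ is closed under $\mathrm{Stab}_\Gamma(i)$. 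This closure I would prove directly: if $\tau\in\mathrm{Stab}_\Gamma(i)$ and $j\in\agree{i}$, then pre-switching one endpoint of the colour-$\tau(j)$ edge of a $C_4$ by $\tau^{-1}$ restores the colour-$(i,j)$ configuration while leaving the three colour-$i$ edges at colour $i$, whence $\tau(j)\in\agree{i}$. With this closure in hand each block is applied to a cotree edge whose current colour still lies in $\agree{i}$, and the induction closes.
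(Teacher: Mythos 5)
Your proposal is correct and follows essentially the same route as the paper: bipartiteness test, a spanning tree made monochromatic via Lemma~\ref{lem:tree}, constant-time lookup of cotree-edge colours in $\agree{i}$, odd cycles and bad fundamental cycles serving as no-certificates through condition (4) of Theorem~\ref{thm:tfae}, and the $C_4$-lifting to produce the explicit monochromatising switching sequence as the yes-certificate. The one place you go beyond the paper is worthwhile: the paper's sketch only asserts that colour-$i$ edges survive each lifted block, whereas you correctly note that unprocessed cotree edges receive a net permutation in $\mathrm{Stab}_\Gamma(i)$ and prove that $\agree{i}$ is closed under $\mathrm{Stab}_\Gamma(i)$, thereby closing a small gap the paper leaves implicit (one could alternatively recover this invariant by applying the equivalence $(1)\Leftrightarrow(2)$ of Theorem~\ref{thm:tfae} to the fundamental cycles after each block).
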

		
%	\section{$n$-Arc Coloured Graphs}
%%%
%%%  General case
%%%  
\section{General $(m,n)$-coloured graphs}

In this section we show the \textsc{$\Gamma$-Switchable $k$-Col} problem is polynomial time solvable.  As noted above, a general $(m,n)$-mixed graph $G$  is $2$-colourable if it only has edges and for some edge colour $i$, $\switchhom{G}{K_2^i}{\Gamma}$ or it only has arcs and for some arc colour $i$, $\switchhom{G}{T_2^i}{\Gamma}$.  Having established the \textsc{$\Gamma$-Hom-$K_2^i$} problem is polynomial time solvable, we now show \textsc{$\Gamma$-Hom-$T_2^i$} polynomially reduces to \textsc{$\Gamma$-Hom-$K_2^i$}.  This establishes the polynomial time result of Theorem~\ref{thm:main} which we restate.

\begin{theorem}
The \textsc{$\Gamma$-Switchable $2$-Col} problem is polynomial time solvable.
\end{theorem}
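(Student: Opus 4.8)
The plan is to combine the structural reduction stated at the start of the section with the edge-coloured result already established. First I would dispose of the trivial cases. Because switching acts on edge colours and on arc colours separately and never converts an edge into an arc or vice versa, whether a given pair of adjacent vertices is joined by an edge or by an arc is a switching invariant. A target on two vertices carries at most one edge or one arc, so a non-trivial $G$ containing both an edge and an arc is a no-instance, while a $G$ with neither is trivially $2$-colourable. This leaves two cases: $G$ has only edges, or $G$ has only arcs. The edge-only case is exactly \textsc{$\Gamma$-Hom-$K_2^i$}, solvable in polynomial time by Corollary~\ref{cor:medge2col}; by Proposition~\ref{prop:maptoanyK2} it suffices to run the certifying algorithm once per colour orbit and report yes if any succeeds.

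The substance of the proof is a polynomial reduction of the arc-only case to the edge-only case. Assume $G$ has only arcs and a bipartite underlying graph (otherwise report no after the bipartiteness test). Fix a bipartition $X \cup Y$ of each component and encode $G$ as a $2n$-edge-coloured graph $G'$ on the same vertex set: an arc of colour $j$ becomes an edge of colour $(j,0)$ if it is directed from $X$ to $Y$, and $(j,1)$ if directed from $Y$ to $X$. Tracking the colour and the direction of a single arc relative to the fixed bipartition gives a permutation action of $\Gamma$ on the $2n$ states $[n]\times\{0,1\}$; let $\rho\colon\Gamma\to S_{2n}$ be the associated permutation representation, which sends $\pi=(\alpha,\beta,\gamma_1,\dots,\gamma_n)$ to $(j,b)\mapsto(\beta(j),\,b\oplus g_j)$, where $g_j=0$ if $\gamma_j$ is the identity and $g_j=1$ otherwise. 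Being a permutation representation, $\rho$ is automatically a homomorphism, so $\Gamma'=\rho(\Gamma)$ is a genuine subgroup of $S_{2n}$, computable in constant time. Crucially, the transformation of an arc under a switch at an incident vertex depends only on $\pi$ and the arc's current colour, not on which endpoint is switched, so switching $G'$ at any vertex set $S$ with $\rho(\pi)$ produces exactly the encoding of $G^{(S,\pi)}$; the double-switch bookkeeping is consistent precisely because of the product rule $\gamma_{\beta(i)}\gamma_i$ recorded earlier.

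I then claim $\switchhom{G}{T_2^i}{\Gamma}$ holds if and only if $\switchhom{G'}{K_2^{(i,0)}}{\Gamma'}$ or $\switchhom{G'}{K_2^{(i,1)}}{\Gamma'}$. A homomorphism of a connected bipartite arc-graph to $T_2^i$ sends one part to each vertex of the target, so after switching all arcs carry colour $i$ with a common orientation relative to the bipartition; the two choices of which part maps to the tail of the target arc correspond exactly to the two encoded targets $(i,0)$ and $(i,1)$. Since both are tested, the outcome is independent of the per-component choice of bipartition, which merely swaps $(j,0)\leftrightarrow(j,1)$. Each of the at most $2n$ tests is an instance of \textsc{$\Gamma'$-Hom-$K_2^{\bullet}$}, polynomial by Corollary~\ref{cor:medge2col}, and $G'$ and $\Gamma'$ are constructed in linear time, so the whole procedure runs in polynomial time. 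The main obstacle is verifying that $\rho$ faithfully simulates switching, i.e.\ that the edge-colour encoding commutes with switching at either endpoint and with switching at both; this is where the indexing of the direction permutations by the \emph{current} arc colour is indispensable, and it is the one point demanding careful checking rather than routine bookkeeping.
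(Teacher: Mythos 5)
Your proposal is correct and follows essentially the same route as the paper: dispose of the mixed and edge-only cases, then reduce the arc-only case to \textsc{$\Gamma'$-Hom-$K_2^{\bullet}$} by encoding each arc as one of $2n$ edge colours relative to a fixed bipartition and pushing $\Gamma$ forward to a permutation group on those $2n$ colours (your $(j,b)\mapsto(\beta(j),b\oplus g_j)$ is exactly the paper's $i^{\pm}\mapsto\beta(i)^{\pm}$ map). The only cosmetic difference is that you test both targets $(i,0)$ and $(i,1)$ in place of the paper's sub-case split on whether $\Gamma$ can reverse arc directions, which handles the same situations.
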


\begin{proof}
Let $G$ be an instance of \textsc{$\Gamma$-Switchable $2$-Col}, i.e., an $(m,n)$-mixed graph.  If $G$ is not bipartite, we can answer No.  If $G$ has both edges and arcs, then we can answer No.  If $G$ only has edges, then by Corollary~\ref{cor:medge2col} we can choose any edge colour $i$ (we still assume $\Gamma$ is transitive) and test $\switchhom{G}{K_2^i}{\Gamma}$ in polynomial time.

Thus assume $G$ is bipartite with bipartition $(A,B)$ and has only arcs.  Analogous to Section~\ref{sec:medge}, we can view $\Gamma$ as acting transitively on the $n$ arc colours. If $\Gamma$ does not allow any arc colours to switch direction, i.e., for all $\pi \in \Gamma$, $\gamma_i(uv) = uv$ for all $i$, then $G$ must have all its arcs from say $A$ to $B$; otherwise, we can say No.  At this point $G$ may be viewed as an $n$-\emph{edge} coloured graph.  (We can ignore the fixed arc directions.) We apply the results of Section~\ref{sec:medge}.

Finally, we may assume $G$ is bipartite, with only arcs, and $\Gamma$ acts transitively on arc colours and directions.  That is, for any arc colours $i$ and $j$, $\Gamma$ contains a permutation $\pi_1$ (respectively $\pi_2$) that takes an arc $uv$ of colour $i$ to an arc $uv$ (respectively $vu$) of colour $j$.

We now construct a $(2n)$-\emph{edge} coloured graph $G'$ as follows. Let $V(G') = V(G)$. If there is an arc of colour $i$ from $u \in A$ to $v \in B$, we put an edge $uv$ of colour $i^+$ in $G'$, and if there is an arc of colour $i$ from $v \in B$ to $u \in A$, we put an edge $uv$ of colour $i^-$ in $G'$.

From $\Gamma$ we construct a new group $\Gamma' \leq S_{2n}$.  Note that $\Gamma$ as described above acts on $(m,n)$-mixed graphs and $\Gamma'$ will be naturally restricted to act on $(2n)$-edge coloured graphs.  Let $\pi = (\alpha, \beta, \gamma_1, \dots, \gamma_n) \in \Gamma$.  Define $\pi' \in \Gamma'$ as follows. For each arc colour $i$,

$$
\pi'(i^+) = \left\{ \begin{matrix} 
  \beta(i)^+ & \mbox{ if } \gamma_i(uv) = uv \\
  \beta(i)^- & \mbox{ if } \gamma_i(uv) = vu
  \end{matrix} \right.
  \hspace{1em}
  \mbox{ and }
  \hspace{1em}
 \pi'(i^-) = \left\{ \begin{matrix} 
  \beta(i)^- & \mbox{ if } \gamma_i(uv) = uv \\
  \beta(i)^+& \mbox{ if } \gamma_i(uv) = vu
  \end{matrix} \right. 
$$

It can be verified that the mapping $\pi \to \pi'$ is a group isomorphism.

The translation of $G$ to $G'$ can be expressed as a function $F(G) = G'$.  It is straightforward to verify $F$ is a bijection from $n$-arc coloured graphs to $2n$-edge coloured graphs provided we fix the bipartition $V(G) = A \cup B$.  Moreover, if $\pi \in \Gamma$ and $\pi'$ is the resulting permutation in $\Gamma'$, then again it is easy to verify that $F(G^{(v,\pi)}) = (G')^{(v,\pi')}$ for any $v$ in $V(G) = V(G')$. 

Suppose $\switchhom{G}{T_2^i}{\Gamma}$.  By the transitivity of $\Gamma$, we may assume that $T_2^i$ has its tail in $A$, and thus all arcs in $G$ can be switched to be colour $i$ with their tail in $A$.  The corresponding switches on $G'$ switch all edges to colour $i^+$.  That is, $\switchhom{G'}{K_2^{i^+}}{\Gamma'}$.  On the other hand, if $\switchhom{G'}{K_2^{i^+}}{\Gamma'}$, then the corresponding switches on $G$ show that $\switchhom{G}{T_2^i}{\Gamma}$ (with the vertices of $A$ mapping to the tail of $T_2^i$).
\end{proof}

We conclude this section with a remark on the number of switches required to change the input $G$ to be monochromatic.  There are $|V(G)|-1$ switches required to change a spanning tree of $G$ to be monochromatic of colour $i$.  To change the cotree edges to colour $i$ (assuming each is of a colour in $\agree{i}$), we claim at most $c_\Gamma|V(G)|$ switches are required where $c_\Gamma$ is a constant depending on $\Gamma$ and the number of colours ($m$ and $n$).  We argue only for $m$-edge coloured graphs, given the reduction above.  For (a labelled) $C_4$, there are $m^4$ edge colourings.  For each vertex there are $|\Gamma|$ switches.  The \emph{reconfiguration graph} $\mathcal{C}$ has a vertex for each edge-colouring of $C_4$ and an edge joining two vertices if there is a single switch that changes one into the other.  (The existence of inverses ensures this is an undirected graph.)  Thus, $\mathcal{C}$ has order $m^4$ and is regular of degree $|\Gamma|$.  Given $j \in \agree{i}$, there is a path in $\mathcal{C}$ from a nearly monochromatic $C_4$ of colours $(i,j)$ to a monochromatic $C_4$ of colour $i$.  The switches on this path can be lifted to $G$ so that the spanning tree remains of colour $i$ and the cotree edge switches to colour $i$.  The total number of switches is at most $\max \{\mathrm{diam}(\mathcal{C}') \} \cdot |V(G)|$ where $\mathcal{C}'$ runs over all components of $\mathcal{C}$.  Thus we have the following.

\begin{proposition}\label{prop:recon}
Let $G$ be a $m$-edge coloured bipartite graph. Let $\Gamma$ be a group acting transitively on $[m]$. If $G$ is $\Gamma$-switch equivalent to a monochromatic graph, then the sequence $\Sigma$ of switches which transforms $G$ to be monochromatic satisfies,
$$
|\Sigma| \leq |V(G)| - 1 + c_\Gamma|V(G)|(|E(G)|-|V(G)|+1)
$$
where $c_\Gamma$ depends only on $\Gamma$ and $m$.
\end{proposition}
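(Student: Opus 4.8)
The plan is to split the required switching sequence into two phases and bound the cost of each separately: first make a spanning tree monochromatic, then correct the cotree edges one at a time. I would assume $G$ is connected, in which case $|E(G)|-|V(G)|+1$ is exactly the number of cotree edges of a spanning tree; the disconnected case follows by applying the argument componentwise. Fix a spanning tree $T$ and the target colour $i$. By the procedure in the proof of Lemma~\ref{lem:tree}, switching once at each non-root vertex in a BFS or DFS order makes $T$ monochromatic of colour $i$, which accounts for the first $|V(G)|-1$ switches.

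For the second phase I would process the $|E(G)|-|V(G)|+1$ cotree edges in turn and show each can be recoloured to $i$ using at most $c_\Gamma|V(G)|$ switches without disturbing any other edge. Since $G$ is switch-equivalent to a monochromatic graph, Theorem~\ref{thm:tfae} guarantees that, once $T$ is colour $i$, every cotree edge has a colour $j \in \agree{i} = \agree{i}^4$ (invoking Theorem~\ref{thm:4eq2k}). I would then introduce the reconfiguration graph $\mathcal{C}$ whose vertices are the $m^4$ edge-colourings of a labelled $C_4$ and whose edges join two colourings that differ by a single switch. The presence of inverses in $\Gamma$ makes $\mathcal{C}$ undirected, and since it is finite each of its components has finite diameter; setting $c_\Gamma$ to the maximum diameter over all components gives a constant depending only on $\Gamma$ and $m$.

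The heart of the argument is the lifting step. For a cotree edge $uv$ of colour $j$, with $u,v$ lying in the two parts $X,Y$ of the bipartition, I would use the edge-colour-preserving map $f$ sending $u\mapsto v_0$, $v\mapsto v_3$, $X\setminus\{u\}\mapsto v_2$ and $Y\setminus\{v\}\mapsto v_1$, where $C_4=v_0v_1v_2v_3v_0$ is coloured so that $v_0v_3$ has colour $j$ and the other three edges have colour $i$. Because $G$ is bipartite, every edge of $G$ meets each lifted switch-set in at most one endpoint, so lifting a single switch at some $v_\ell$ to the corresponding set of $G$ affects each edge $ab$ of $G$ exactly as the switch at $v_\ell$ affects $f(a)f(b)$ in $C_4$. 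Taking a shortest path in $\mathcal{C}$ from the nearly monochromatic colouring of colours $(i,j)$ to the monochromatic colouring of colour $i$ and lifting every switch along it then reproduces, edge by edge, the net colour change of $C_4$: the edge $uv$ passes from $j$ to $i$, while every other edge of $G$ (the tree edges and all remaining cotree edges, which map to the $i$-coloured edges of $C_4$) returns to its starting colour. Each lifted switch is a switch at a subset of one part, hence at most $|V(G)|$ individual switches, and the path has length at most $c_\Gamma$, so correcting one cotree edge costs at most $c_\Gamma|V(G)|$ switches. Summing over the cotree edges and adding the first phase yields $|V(G)|-1+c_\Gamma|V(G)|(|E(G)|-|V(G)|+1)$.

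I expect the main obstacle to be the verification that this lift is both faithful and non-interfering: precisely, that bipartiteness forces each edge to have at most one endpoint in any lifted switch-set (ruling out the $\alpha^2$ behaviour of the two-endpoint case described in Section~\ref{sec:intro}), and that the net effect of a complete path in $\mathcal{C}$ fixes every $i$-coloured edge, so that successively processed cotree edges do not undo one another. Once this correspondence between switching $C_4$ and switching $G$ is pinned down, the remaining counting is routine.
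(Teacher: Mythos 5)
Your argument is essentially the paper's: a spanning-tree phase costing $|V(G)|-1$ switches, followed by one lifted walk in the reconfiguration graph $\mathcal{C}$ of coloured $C_4$'s per cotree edge, with $c_\Gamma$ taken to be the maximum diameter of a component of $\mathcal{C}$; the lifting map $u\mapsto v_0$, $v\mapsto v_3$, $X\setminus\{u\}\mapsto v_2$, $Y\setminus\{v\}\mapsto v_1$ is exactly the one the paper uses. One intermediate claim is too strong, however: after lifting a path of $\mathcal{C}$ to recolour the cotree edge $uv$, the \emph{unprocessed} cotree edges need not return to their starting colours. What the lift actually guarantees (and what you correctly establish) is that the colour of each edge $ab$ of $G$ is acted on by the same net permutation $\sigma$ as the colour of $f(a)f(b)$ in the $C_4$; since $f(a)f(b)$ is an $i$-coloured edge that ends at colour $i$, one only knows $\sigma(i)=i$, so an edge of colour $j'\neq i$ may end at $\sigma(j')\neq j'$. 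This does not damage the bound or the induction: the recoloured graph is still $\Gamma$-switch equivalent to the original, hence still satisfies $\switchhom{G}{K_2^i}{\Gamma}$, and $T$ is still monochromatic of colour $i$, so the argument for $(2)\Rightarrow(3)$ of Theorem~\ref{thm:tfae} shows every remaining cotree edge's (possibly new) colour still lies in $\agree{i}$ and can be processed in its turn at the same cost. With "returns to its starting colour" replaced by "remains colour $i$ if it was colour $i$, and otherwise remains in $\agree{i}$," your proof coincides with the paper's.
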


In the case that $\Gamma$ is Abelian, the switches in $\Sigma$ can be reordered, then combined, so that each vertex is switched only once.  
	
	\section{Conclusion}

We have established a dichotomy for 	the \textsc{$\Gamma$-Switchable $k$-Col} problem.  This is a step in obtaining a dichotomy theorem for \textsc{$\Gamma$-Hom-$H$} for all $(m,n)$-mixed graphs $H$ and all transitive permutation groups $\Gamma$.  Work towards a general dichotomy is the focus of our companion paper~\citep{Brewster_KM_2022}.

If $\Gamma$ and $m$ are part of the input to \textsc{$\Gamma$-Switchable $k$-Col}, then the analysis above with constant time look-ups for the equivalence classes no longer holds.  A na\"{i}ve computation of the closure of the reconfiguration graph is not possible in polynomial time, but the required computation of the equivalence classes $\agree{i}$ is possible without considering all switches.  This is studied further in~\citep{Brewster_KM_2022}.

Finally, we have restricted our attention to simple $m$-edge coloured graphs in this work.  A natural next step is to allow parallel edges (of different colours) and loops.  We believe with parallel edges and loops the analysis of \textsc{$\Gamma$-Hom-$H$} will be interesting, but non-trivial, similar to the situation for signed graphs~\citep{Brewster_FHN_2017, Brewster_Siggers_2018}.

\acknowledgements
We thank the referees for their helpful suggestions.  
	
	\bibliography{GammaSwitchBib}{}
\bibliographystyle{abbrvnat}

\end{document}